\numberwithin{equation}{section}
\theoremstyle{plain}
\newtheorem{lemma}{\bf Lemma}[section]
\newtheorem{theorem}[lemma]{\bf Theorem}
\newtheorem{problem}[lemma]{\bf Problem}           
\newtheorem{fact}[lemma]{\bf Proposition}  
\theoremstyle{definition}
\newtheorem{definition}[lemma]{\bf Definition}
\theoremstyle{remark}
\newtheorem{remark}[lemma]{\bf Remark}
\newtheorem*{remark*}{\bf Remark}
\newcommand{\R}{\mathbb{R}}
\newcommand{\Sphere}{\mathbb{S}}
\newcommand{\eps}{\varepsilon}
\newcommand{\Rz}{\mathcal{R}}
\def\Jac{\mathbf{J}}
\newcommand{\dv}{\mathrm{div}\,}
\newcommand{\Ep}{\bigwedge\nolimits}
\newcommand{\lap}{\Delta\,}
\newcommand{\laps}[1]{{(-\lap)^{\frac{#1}{2}}}}
\newcommand{\aleq}{\precsim}
\def\charfn_#1{{\raise1.2pt\hbox{$\chi  %
_{\kern-1pt\lower4pt\hbox{{$\scriptstyle#1$}}}$}}}  %
\def\mvint_#1{\mathchoice
	          {\mathop{\vrule width 6pt height 3 pt depth -2.5pt
	            \kern -9pt \intop}\nolimits_{\kern -3pt #1}}%
{\mathop{\vrule width 5pt height 3 pt depth -2.6pt
	                        \kern -6pt \intop}\nolimits_{#1}}%
{\mathop{\vrule width 5pt height 3 pt depth -2.6pt
	                   \kern -6pt \intop}\nolimits_{#1}}%
{\mathop{\vrule width 5pt height 3 pt depth -2.6pt
	                      \kern -6pt \intop}\nolimits_{#1}}}
\newcommand{\omitted}[1]{}
\subjclass[2010]{Primary 35B65, 35J60, 35J70, secondary 58E20.}
\renewcommand{\MR}[1]{}   
\begin{document}
\renewcommand{\baselinestretch}{1.03}

\title[Regularity for $H$-systems\,{}?]{Invitation to $H$-systems in higher dimensions:\\ known results, new facts, and related open problems}

\author[A. Schikorra]{Armin Schikorra}
\address{Armin Schikorra \newline \indent
Mathematisches Institut, 
Albert-Ludwigs-Universit\"at Freiburg\newline \indent
Eckerstra\ss{}e 1,
79104 Freiburg im Breisgau,
Germany}
\email{armin.schikorra@math.uni-freiburg.de}

\author[P. Strzelecki]{Pawe\l{} Strzelecki}
\address{Pawe\l{} Strzelecki\newline \indent
Institute of Mathematics, University of Warsaw\newline \indent
Banacha 2, 02--097 Warszawa, Poland}
\email{P.Strzelecki@mimuw.edu.pl}

\date{Version of \today}

\begin{abstract} In this paper, we discuss two well-known open problems in the regularity theory for nonlinear, conformally invariant elliptic systems in dimensions $n\ge 3$, with a critical nonlinearity: $H$-systems (equations of hypersurfaces of prescribed
mean curvature) and $n$-har\-mo\-nic maps into compact Riemannian manifolds. 

For $n=2$ several solutions of these problems are known but they all break down in higher dimensions (unless one considers special cases, e.g. hypersurfaces of constant mean curvature or manifolds with symmetries). We discuss some of the known proofs and hint at the main difficulties.

We also state a few new results (such as positive answers for all solutions of class $W^{n/2,2}$ for even $n$, instead of $W^{1,n}$) and list some open questions of independent interest --- including specific endpoint variants of the Coifman-Rochberg-Weiss theorem, addressing the boundedness of commutators of fractional and singular integrals with multiplication by bounded functions of class $W^{1,n}$ --- that would lead to solutions of these two problems.

\end{abstract}

\maketitle

\setcounter{tocdepth}{1}
\tableofcontents

\section{Introduction}

25 years ago F. H\'{e}lein \cite{Helein-1991} proved that all harmonic maps from planar domains into compact Riemannian manifolds are smooth. Besides H\'{e}lein's own insight into the method of using the moving frames to rewrite the right-hand side of the harmonic map equation and reveal its divergence structure, the main analytical ingredient of this achievement was the discovery that certain nonlinear expressions -- like the Jacobian of a map $u\in W^{1,n}(\R^n,\R^n)$, or various `div-curl' quantities -- enjoy, due to \emph{cancellation\/} phenomena, slightly better regularity or integrability properties than those that would follow only from their growth properties.   

H\'{e}lein's ideas and the powerful Hardy space methods based on \cite{Muller-1990,CLMS} have triggered a stream of research. In particular, F. Bethuel \cite{Bethuel-1992} proved that all weak solutions $u\in W^{1,2}(B^2,\R^3)$  of the equation of surfaces in $\R^3$ that have prescribed mean curvature, $\Delta u = 2H(u) u_x\times u_y$, where $H\colon \R^3\to \R$ is bounded and Lipschitz, are continuous. 

Fifteen years later, in an influential paper \cite{Riviere-2007}, T. Rivi\`{e}re has derived a general conservation law for solutions $u\in W^{1,2}(B^2,\R^m)$ of 
\begin{equation}\label{rivieresystem} \Delta u = \Omega \cdot \nabla u ,\end{equation} 
where $\Omega$ is an $L^2$-matrix with values in $so(m)\otimes\R^2$. An important point of \cite{Riviere-2007} was that the antisymmetry of $\Omega$ can be used to replace the divergence structure; due to this, all weak solutions of  $\Delta u = \Omega \cdot \nabla u$ are continuous. To an untrained eye, the result looks dry and technical, but two long-standing open problems were its corollaries: 
a conjecture by S. Hildebrandt claiming that critical points of 
elliptic conformally invariant Lagrangians in two dimensions are continuous (this was known before only under a boundedness assumption on the map or a stronger smoothness assumption on the target manifold, due to P. Chon\'{e} \cite{Chone-1995}),
and a conjecture by E. Heinz asserting that the solutions to the prescribed mean curvature equation 
with only \emph{bounded} mean curvature
are continuous.

Some of the 
generalizations and extensions of H\'{e}lein's work  -- we discuss them in more detail below
-- were concerned with  applications of the ideas and techniques of \cite{CLMS} in dimensions $n\geq 3$, to conformally invariant problems, in particular to equations involving the $n$-Laplace operator. One of them is
\begin{equation}\label{nriviere-intro}
-\dv \bigl(|\nabla u|^{n-2}\nabla u\bigr) = \Omega \cdot |\nabla u|^{n-2} \nabla u, \qquad u \in W^{1,n}(B^n,\R^m)\,  \end{equation}
with an antisymmetric $\Omega$ of class $L^{n}$, cf. \cite[eq. III.23]{Riviere-2011} and Problem~\ref{rivierepb} below. Despite the efforts and interest of numerous authors, three natural $n$-dimensional counterparts of the results of \cite{Helein-1991}, \cite{Bethuel-1992} and \cite{Riviere-2007}, in particular the regularity questions for $n$-harmonic maps and for $H$-systems in dimension $n>2$, are still open; only partial results are known. For $n=2$, some of the milestones described above were due to successful applications of \emph{linear} harmonic analysis to \emph{nonlinear} problems; it seems that for $n>2$ a deeper understanding of the nonlinearity (and maybe of the underlying geometry) is simply missing.

It is our aim to describe these problems and some attempts at their solutions. There is a lot of circumstantial evidence -- in the form of partial positive results in simplified cases or under extra assumptions that do not trivialize the problems -- that the answers might be positive, and the counterexamples, if they exist at all, would have to be rather subtle.

This paper is, for the most part, a survey. However, later on, we do insert 
a few 
new results which provide part of the evidence 
alluded to above. These new observations include 
\begin{itemize}
\item regularity of bounded solutions to $H$-systems with the mean curvature $H$ being just bounded and H\"{o}lder continuous;
\item in even dimensions, regularity of all $W^{n/2,2}$--solutions 
to $H$-systems with $H$ bounded and Lipschitz;
\item a splitting result for $H$-systems, allowing to write the right side as a sum of the determinant terms and a term $\Omega_{ij}\cdot |\nabla u|^{n-2} \nabla u^j$ with an antisymmetric matrix $\Omega$ in the Lorentz space $L^{(n,n/2)}\subsetneq L^n$;
\item a higher integrability result for a toy non-local version of \eqref{nriviere-intro}.
\end{itemize}
However, our main wish is to  attract the attention of the reader (a) to those specific regularity problems involving the $n$--Laplace operator, (b) to some related problems of analysis that in our opinion are interesting in their own right.

\section{Statement of the problems}       
          
\def\nnn{{\mathcal N}}

\subsection{Regularity of $n$-harmonic maps into compact Riemannian manifolds.} Let $\mathcal N$ be a compact closed Riemannian
manifold, isometrically 
embedded in $\R^m$, with $\pi\colon \R^m\supset B_\delta(\nnn)\to \nnn$ 
being the standard nearest point projection of a tubular  
neighbourhood of $\nnn$ onto $\nnn$. 
Let $\Omega
\subset \R^n$ be open and   
bounded, and let $p\in (1,n]$. Consider 
mappings $u:\Omega\to\nnn$ such that the $p$-Dirichlet energy of 
$u$, given by the functional 
\begin{equation} 
\label{p-energy} 
E_p[u]\colon= \frac 1p \int_\Omega |\nabla u|^p\, dx 
= \frac 1p \int_\Omega \Biggl(\sum_{i,j}
  \biggl(\frac{\partial u^i}{\partial x_j}\biggr)^2 
  \Biggr)^{p/2}\, dx\, , 
\end{equation} 
is finite. Here, $u=(u^1,\ldots,u^m) \colon \Omega \to \R^m$ 
is a mapping with all coordinates $u^j\in W^{1,p}(\Omega)$,
satisfying the extra constraint $u(x)\in\nnn$ for a.e. $x\in\Omega$. 
The class of all such maps is traditionally denoted by 
$W^{1,p}(\Omega, \nnn)$. 
 
\begin{definition} 
A map
$u\in W^{1,p}(\Omega, \nnn)$ is 
{\em (weakly) $p$-harmonic\/} if and only if $u$ is a critical point 
of $E_p$ with respect  to variations in the range, i.e. 
\begin{equation} 
\label{d-over-dt} 
 \biggl.\frac{d}{dt}\biggr|_{t=0}  
        E_p\Bigl[\pi\circ ( u+t\varphi)\Bigr]  =  0 
\qquad\mbox{for each 
$\varphi\in W^{1,p}_0(\Omega,{\R}^m)\cap  
            L^\infty(\Omega,\R^m).$ }
\end{equation} 
\end{definition}  
For $p=2$ one simply says that $u$ is a harmonic map into $\nnn$.
 
A computation (see e.g. M.~Fuchs \cite{fuchs-book} 
or H\'{e}lein \cite{helein-book}) shows 
that (\ref{d-over-dt}) yields the Euler--Lagrange 
system 
\begin{equation}
-\dv(|\nabla u|^{p-2}\nabla u )\perp T_u\, \nnn 
\qquad\mbox{in the sense of ${\mathcal D}'(\Omega,\R^n)$,} 
\label{div-perp}
\end{equation}        
or, equivalently,
\begin{equation}     
	\label{phar}
   -\dv(|\nabla u|^{p-2}\nabla u )=|\nabla u|^{p-2}A_u(\nabla u, \nabla u) ,
\end{equation}
where $A$ denotes the second fundamental form of the embedding $\nnn\subset \R^d$.   

For an excellent review of numerous issues concerning (partial) regularity  of harmonic and $p$-harmonic maps, we refer to R.~Hardt's survey \cite{Hardt-1997} (the case of $p\not =2$ is reviewed briefly in \cite[Sec.~8]{Hardt-1997}; we give more references below).

In the case $p < n$, one cannot expect \emph{any} regularity for \emph{weakly} $p$-harmonic maps without an extra assumption. Already for $p = 2$ and $n > 2$, Rivi\`{e}re \cite{Riviere-1995} showed the existence of \emph{everywhere} discontinuous harmonic maps into the round sphere. If one considers \emph{minimizing} $p$-harmonic maps one has sharp partial regularity results: such maps are regular outside a closed singular set which has Hausdorff dimension at most $n- \lfloor p \rfloor -1$, see C.B. Morrey \cite{Morrey-1948} and R.~Schoen and K.~Uhlenbeck \cite{Schoen-Uhlenbeck-1982} for $p = 2$, 
and for $p \geq 2$ R.~Hardt and F.H.~Lin \cite{Hardt-Lin-1987}, Fuchs \cite{Fuchs-1989,Fuchs-1990} and S.~Luckhaus \cite{Luckhaus-1988}. L.~Simon \cite{Simon-1995} proved that the singular set of a minimizing harmonic map is rectifiable. For \emph{stationary} $p$-harmonic maps into symmetric targets, the singular set satisfies ${\mathcal{H}}^{n-p} (\text{sing}\, u)=0$, see Fuchs \cite{Fuchs-1993}, Takeuchi \cite{Takeuchi-1994}, Toro and Wang \cite{ToroW-1995}, the second author's \cite{Strzelecki-1994,Strzelecki-1996}, and the recent work by A.~Naber, D.~Valtorta and G.~Veronelli \cite{Naber-2015}.\footnote{For $p=2$, Naber and Valtorta \cite{Naber2-2015} prove new results on the rectifiability of the strata of the singular set of a \emph{stationary} harmonic map.} For generalizations we also refer to \cite{Hajlasz-Strzelecki-1998,Wang-2003,Strzelecki-2003b}.

Despite several results that we describe in the next section, the following problem is still open.
\begin{problem}[$p=n$] 
\label{n-harmonic}
Are all weakly $n$-harmonic maps $u\in W^{1,n}(B^n,\nnn)$ continuous?
\end{problem}

\subsection{$H$-systems.} The $H$-systems are closely related to 
$n$-harmonic maps, and from the viewpoint of regularity theory they present 
the same analytical difficulties in an a bit simpler framework. They come up as
the Euler--Lagrange systems for the $n$-Dirichlet energy plus a volume term; 
one of the sources of motivation is that conformal solutions of an $H$-system  
parametrize hypersurfaces of prescribed mean curvature.

Let $u=(u^1,\ldots,u^{n+1})\colon B^n \to \R^{n+1}$ be of class $W^{1,n}$. 
Set
\[
\Jac u = \frac{\partial u}{\partial x_1} \times \cdots \times \frac{\partial u}{\partial x_n} \, ;
\]                   
for $x\in B^n$ this is a vector in $\R^{n+1}$ with coordinates given by the $n\times n$ minors of the Jacobi matrix of $Du$.

\begin{definition}  For a bounded function $H\colon \R^{n+1}\to\R$, an $H$-system is 
	\begin{equation}   
		\label{hsystem}
		-\dv \bigl(|\nabla u|^{n-2}\nabla u\bigr) = H(u)\, \Jac u, \qquad u\in W^{1,n}(B^n,\R^{n+1}).
\end{equation}       

For \emph{constant} $H$, solutions of \eqref{hsystem} are also known as $n$-harmonic maps with prescribed volume. Namely, for a map $u=(u^1,\ldots,u^{n+1})\colon B^n \to \R^{n+1}$ of class $W^{1,n}$ one can define the volume of the cone over $u(B^n)$ with vertex at $0\in \R^{n+1}$ as
\[
V(u)=\frac 1{n+1}\int_{B^n} u\cdot \Jac u\, dx\, .
\]
For the minimization problem
\[
\min_u \int_{B^n} |\nabla u|^n\, dx 
\]
under a prescribed Dirichlet boundary condition for $u$ on $\partial B^n$ \emph{and} prescribed volume $V(u)=\mathrm{const}$, \eqref{hsystem} is the Euler--Lagrange system and the constant $H$ is just the Euler--Lagrange multiplier. For variable $H$, a variational approach to the existence of solutions of \eqref{hsystem} is set forth by F.~Duzaar and J.~Grotowski in \cite{DuzaarG-2000}.

\begin{problem}\label{hsystempb} Let $n>2$. Suppose that $H\colon \R^{n+1}\to\R$ is bounded and Lipschitz. Are all weak solutions $u\in W^{1,n}(B^n,\R^{n+1})$ of \eqref{hsystem} continuous?
\end{problem}

For $n=2$ the answer is positive, see Bethuel~\cite{Bethuel-1992}. We discuss 
known partial evidence for $n>2$, including a few new observations, in the next section.           
In dimension $n=2$, Problems~\ref{n-harmonic} and~\ref{hsystempb} are closely related: the same tools of mathematical analysis (Hardy space and BMO duality, or its variants) can be used to prove regularity of solutions. Basically, H\'{e}lein's Coulomb moving frame allows one to rewrite the equation of harmonic maps in a form analogous to \eqref{hsystem}, and the case of symmetric target manifolds corresponds to $H$ being constant. For $n>2$, Wang \cite{Wang-2005}, in his paper on weak limits of $n$-harmonic maps, gives a construction of an appropriate moving frame, see also \cite{Miskiewicz-2016}. Thus, we are tempted to think that a solution to Problem~\ref{hsystempb} would open the way to Problem~\ref{n-harmonic}.
	
\end{definition}                        
Rivi\`{e}re in his survey article \cite[eq. III.23]{Riviere-2011} poses the following problem on regularity of degenerate systems with an \emph{antisymmetric potential}. A positive answer to his question would imply a positive answer to Problem~\ref{hsystempb} with $H$ just of class $L^\infty$, and to Problem~\ref{n-harmonic} for all $C^2$-manifolds.

\begin{problem}\label{rivierepb}
Let $u \in W^{1,n}(B^n,\R^{m})$ satisfy a system of the form
    \begin{equation}\label{nrivieresystem}
-\dv \bigl(|\nabla u|^{n-2}\nabla u\bigr) = \Omega \cdot |\nabla u|^{n-2} \nabla u. \end{equation} 
Assume that $\Omega = (\Omega_{ij}^\alpha)_{1 \leq i,j \leq m}^{1 \leq \alpha \leq n} \in L^n (\R^{m\times m}\otimes \R^n)$ is antisymmetric, i.e. $\Omega_{ij}^\alpha = -\Omega_{ji}^\alpha$ for all $1 \leq i,j \leq m$. Is it true that $u$ is continuous?
 \end{problem}          

\section{Partial evidence for regularity}

\subsection{Results on $n$-harmonic maps into symmetric targets}  

Shortly after the appearance of H\'{e}lein \cite{Helein-1991}, Bethuel~\cite{Bethuel-1992} and Evans~\cite{Evans-1991}, several authors have noted that the following result holds true.  

\begin{theorem}\label{roundsphere} Assume that $\nnn\subset \R^d$ is a round sphere $\Sphere^{d-1}$, or, more generally, a compact homogeneous space with a left-invariant metric. 
Then, all $n$-harmonic maps $u\in W^{1,n}(B^n,\nnn)$ are locally of class $C^\beta$ for some $\beta>0$.
\end{theorem}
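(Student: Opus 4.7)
My plan is to prove the theorem by establishing an $\varepsilon$-regularity (or small-energy) theorem: there exists $\varepsilon_0 = \varepsilon_0(n,\nnn) > 0$ such that whenever $\int_{B_{2R}(x_0)} |\nabla u|^n\, dx < \varepsilon_0$, the restriction $u|_{B_R(x_0)}$ is Hölder continuous with a quantitative bound. Since the $n$-energy is absolutely continuous as a set function, every point of $B^n$ has a small-energy neighbourhood, and a covering argument then upgrades this to $u \in C^\beta_{\mathrm{loc}}(B^n,\nnn)$. Unlike the case $p<n$, no singular set arises: the scaling is critical and the symmetry of $\nnn$ will provide cancellation at \emph{every} scale, independently of how the energy is distributed.

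I would first extract the algebraic input. For $\nnn = \Sphere^{d-1}$, the constraint $|u|^2 \equiv 1$ gives $u\cdot \nabla u = 0$, and \eqref{phar} reduces to $-\dv\bigl(|\nabla u|^{n-2}\nabla u^i\bigr) = u^i|\nabla u|^n$. A one-line computation using $u\cdot \nabla u = 0$ yields
\begin{equation*}
u^i|\nabla u|^n = \sum_{j=1}^{d} B_{ij}\cdot\bigl(|\nabla u|^{n-2}\nabla u^j\bigr), \qquad B_{ij} := u^i\nabla u^j - u^j\nabla u^i,
\end{equation*}
together with the Noether-type identity $\dv\bigl(|\nabla u|^{n-2}B_{ij}\bigr)=0$, which is an immediate consequence of $B_{ij}=-B_{ji}$ and of the equation itself. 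For a general compact homogeneous space with a left-invariant metric one uses Killing vector fields (which span $T\nnn$ pointwise) to encode the second fundamental form in exactly the same antisymmetric, divergence-free form; this is precisely what puts the equation into a version of Rivière's framework \eqref{nrivieresystem} with extra structure (a Noether current in place of mere antisymmetry).

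The analytic core is then to exploit this structure via the Hardy-space technique of \cite{CLMS}. Because $V_{ij}:=|\nabla u|^{n-2}B_{ij}$ is divergence-free, a Hodge-type decomposition represents $V_{ij}$ as an exterior differential of a 2-form with $L^{n/(n-1)}$ control, so its pairing with $\nabla u$ acquires local Hardy-space $\mathcal{H}^1$ integrability --- the div-curl cancellation. Testing the equation against $\eta^n(u - \bar{u}_{B_R})$ for a standard cutoff $\eta$ and applying $\mathcal{H}^1$--$\mathrm{BMO}$ duality to the resulting commutator-type integral gives a Caccioppoli estimate with a gain,
\begin{equation*}
\int_{B_R} \eta^n |\nabla u|^n\, dx \le C\bigg(\int_{B_{2R}}|\nabla u|^n\, dx\bigg)^{\delta}\int_{B_{2R}}|\nabla u|^n\, dx + \text{lower order terms},
\end{equation*}
for some $\delta>0$, valid under the small-energy assumption. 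A Morrey/Campanato iteration then yields a power decay $\int_{B_r}|\nabla u|^n \lesssim r^\gamma$ with $\gamma>0$, and by Morrey's lemma $u \in C^{\gamma/n}_{\mathrm{loc}}$.

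The main obstacle will be implementing the Hodge/Hardy-space step cleanly in the \emph{nonlinear}, degenerate setting of the $n$-Laplacian: $|\nabla u|^{n-2}\nabla u$ is not itself a gradient, and the linear $\mathcal{H}^1$--$\mathrm{BMO}$ duality must be mediated by a nonlinear Hodge decomposition in the spirit of Iwaniec, together with careful comparison to the $n$-harmonic replacement, in order to convert the Hardy-space gain into a genuinely useful bound on $\nabla u$. Once this is set up for the sphere, the extension to a general compact homogeneous space with a left-invariant metric is essentially bookkeeping: the Killing-field frame produces the same divergence-free antisymmetric coefficients modulo commutator terms that are themselves small enough to be absorbed under the small-energy hypothesis, as carried out in \cite{Strzelecki-1994,Strzelecki-1996,Fuchs-1993,ToroW-1995}.
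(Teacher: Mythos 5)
Your proposal is correct and follows essentially the same route as the paper: rewrite $u^i|\nabla u|^n$ using the constraint $u\cdot\nabla u=0$ as $\Omega_{ij}\cdot\nabla u^j$ with $\Omega_{ij}=|\nabla u|^{n-2}(u^i\nabla u^j-u^j\nabla u^i)$ divergence-free by the equation (Noether), invoke the div-curl lemma of \cite{CLMS} and $\mathcal{H}^1$--$BMO$ duality to gain an extra power of the energy, and close a Caccioppoli/hole-filling estimate by Morrey iteration under a small-energy hypothesis furnished by absolute continuity; this is exactly the scheme sketched in the paper's ``How this approach works in the sphere case,'' with the homogeneous-space case handled via Killing fields as in \cite{ToroW-1995}. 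One minor remark: you flag a ``nonlinear Hodge decomposition in the spirit of Iwaniec'' as the main obstacle, but for the sphere this is not actually needed on the right-hand side --- $V_{ij}=|\nabla u|^{n-2}B_{ij}\in L^{n'}$ is divergence-free and pairs with the genuine gradient $\nabla u^j\in L^n$, so the linear div-curl/Hardy machinery applies directly; the Iwaniec-type Hodge decomposition becomes essential only when one departs from this exact divergence-free structure (as in the $H$-system arguments later in the paper).
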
     

To the best of our knowledge, for $\nnn$ being a round sphere, the theorem was independently stated and proved by M.~Fuchs \cite{Fuchs-1993}, L.~Mou and P.~Yang \cite{MouY-1996b}, H.~Takeuchi \cite{Takeuchi-1994}, and the second named author in \cite{Strzelecki-1994}. The more general version for compact homogeneous spaces is due to T.~Toro and C.Y.~Wang \cite{ToroW-1995}.

There are, basically, three proofs of that result. Two rely on the duality of $BMO$ and the Hardy space, combined with the observation that for symmetric targets $\nnn$ the right-hand side of \eqref{phar} belongs to the 
(local) Hardy space. One of these two proofs is modelled on Evans' indirect blow-up argument, the other one employs a simple hole-filling trick. The third one \cite{Schikorra-2015} interprets the equation as a nonlocal system in the spirit of fractional harmonic maps, cf. \cite{DaLio-Riviere-2011,DaLio-Riviere-2011b,Schikorra-2012n2,DaLio-Schikorra-2014}.
         
Let us describe the argument in rather general terms, with emphasis on how the right--hand side is estimated.

\subsubsection{A possible approach to regularity} 
Let $u \in W^{1,n}$ be a solution to the system
\[
\dv(|\nabla u|^{n-2}\nabla u) =f(u,\nabla u),
\]
with $|f(u,\nabla u)\aleq |\nabla u|^n$. Let $[\cdot]_X$ be a critical semi-norm. Critical means: for any $\sigma > 0$ an estimate of the form
\begin{equation}\label{choiceofnorm}
\sup_{B_r} r^{-\sigma}[u]_{X,B_r} < \infty\, ,
\end{equation}
where the supremum is taken over balls $B_r$ with radius $r$, implies that $u$ is H\"older continuous.

Typical choices of such critical seminorms include
\[
[u]_{X,B_r} := \begin{cases} \|\nabla u\|_{L^n(B_r)},\\[4pt]
\displaystyle\sup_{B_\varrho(z)\subset B_r} \biggl( \varrho^{p-n}
\int_{B_\varrho(z)}
|\nabla u |^p\biggr)^{1/p} \qquad \mbox{for some $p < n$},\\[15pt]
\|\nabla u\|_{L^{(n,\infty)}(B_r)},\\[4pt] 
[u]_{BMO,B_r}.
\end{cases}
\]
(For definitions of the Lorentz spaces $L^{(p,q)}$ and the space $BMO$ of functions having bounded mean oscillation we refer to L.~Grafakos' monographs \cite{Grafakos-2014a,Grafakos-2014b}).

In all instances that we are aware of, one works with a semi-norm that satisfies
\begin{equation}\label{dominancebyLn}
[u]_{BMO, B_r} \aleq [u]_{X,B_r} \aleq \|\nabla u\|_{L^n(B_r)}\, .
\end{equation}
The goal is usually to obtain a decay estimate of the form
\begin{equation}
\label{goal}
[u]_{X,B_r} \leq (\tilde{\tau} + C\|\nabla u\|_{L^n(B_{2r})}^\gamma) [u]_{X,B_{2r}},
\end{equation}
with some fixed constants $\tilde\tau\in (0,1)$, $C$ and $\gamma>0$, independent of the specific ball $B_r$. It is good to think that $\tilde\tau$ is responsible simply for localization and cutting off the solution, whereas $C$ and $\gamma$ are related to the \emph{structure} of the right-hand side, $f(u,\nabla u)$. Here, the right choice of the semi-norm becomes extremely important; we shall come to that point later on.

On all balls $B_{2r}$ with $r$ small enough, due to the absolute continuity of the integral, we can estimate \eqref{goal} further to obtain
\[
[u]_{X,B_r} \leq \tau [u]_{X,B_{2r}}, \qquad \mbox{for some }\tau \in (\tilde\tau, 1).
\]
Once such estimate is ensured, we can employ an iteration scheme on smaller and smaller balls, see \cite[Chapter III]{giaquinta-book}, to obtain
\[
[u]_{X,B_r} \aleq r^\sigma\ [u]_{X,\R^n}.
\]
According to \eqref{choiceofnorm}, this then implies H\"older continuity.

So how to obtain an estimate of type \eqref{goal}? There are two ingredients needed: Firstly\footnote{A word of caution: one has to be careful with the boundary data, localize the solution properly etc., but these technical difficulties are minor or at best moderate; here and in the sequel, we have decided to sweep them under the rug in order to show only the essence of the arguments.}, for an estimate of the left-hand side  we need to find a seminorm $[\cdot]_Y$ so that
\begin{equation}\label{lhsest}
[u]_X^{n-1} \aleq \sup_{[\varphi]_Y \leq 1} \int |\nabla u|^{n-2}\nabla u\cdot \nabla \varphi.
\end{equation}
And then we need to obtain the right-hand side estimate of the form
\begin{equation}\label{rhsest}
 \int f \varphi \aleq [u]_X^{s} [\varphi]_Y, \qquad\mbox{where $s>n-1$}\, .
\end{equation}
(Typically, one strives for $s=n$, reflecting the degree of homogeneity of the right hand side in $u$; values of $s\le n-1$ do not yield any significant gain). 

One of the sources of the difficulties is that the canonical choice $X = Y = W^{1,n}(\R^n)$ contains \emph{unbounded} functions. This is why one is often forced to look for `non-standard' function spaces $X$ and $Y$ in the scheme described above. For the same reason, the \emph{structure} of $f$ matters a lot.

\subsubsection{Example: How this approach works in the sphere case} 
To give an idea how this works in practise, let us regard the case of the second author's proof of Theorem~\ref{roundsphere},  \cite{Strzelecki-1994}:

The $n$-harmonic map system is
\begin{equation}
\label{phar-s}
-\dv (|\nabla u|^{n-2}\nabla u) = u |\nabla u|^n.
\end{equation}
Now, 
\[
u^i|\nabla u|^{n} = \left ( u^i |\nabla u|^{n-2}\nabla u^j -  u^j |\nabla u|^{n-2}\nabla u^i \right ) \cdot \nabla u^j.
\]
This
is due to the fact that $u^j \nabla u^j = \frac{1}{2} \nabla |u|^2 = 0$ since $u$ belongs to the sphere -- we remind the reader that we use Einstein's summation convention. This formulation is useful, since \eqref{phar-s} implies
\[
\dv \left ( u^i |\nabla u|^{n-2}\nabla u^j -  u^j |\nabla u|^{n-2}\nabla u^i \right ) = 0\, ;
\]
that is, the initial system can be equivalently written as
\begin{equation}\label{eq:spherepde}
-\dv (|\nabla u|^{n-2}\nabla u) = \Omega \cdot \nabla u.
\end{equation}
where $\dv \Omega = 0$. Then, by the div-curl lemma \cite{CLMS}, the term $\Omega \cdot \nabla u$ belongs to the Hardy space $\mathcal{H}^1$ and the duality of Hardy space and BMO implies that
\[
\int  \varphi \; (\Omega \cdot \nabla u)  \; \aleq \; [\varphi]_{BMO}\ \|\Omega\|_{L^{n'}}\ \|\nabla u\|_{L^n} \; \aleq \; \|\nabla u\|_{L^n}^{n}\ \|\nabla \varphi\|_{L^n}.
\]
So taking $[\varphi]_X = [\varphi]_Y := \|\nabla \varphi\|_{L^n}$, 
we obtain a right-hand side estimate as in \eqref{rhsest},
with $s = n$.
The corresponding left-hand side estimate as in \eqref{lhsest} is easy:

\[
\|\nabla u\|_{L^n}^{n-1} \leq \sup_{\|\nabla \varphi \|_{L^n}\leq 1} \int |\nabla u|^{n-2}\nabla u\cdot \nabla \varphi.
\]
All this yields \eqref{goal} with $\gamma=1$, and allows to conclude that $u\in C^\beta$ for some $\beta>0$. 

\subsubsection{Why this is problematic in the general case} 
The main problem in the general case is that the right-hand side potential $\Omega$ in \eqref{eq:spherepde} may not be divergence free. For the $H$-system \eqref{hsystem} it is divergence free up to the multiplication of $H(u)$, a term which belongs to $L^\infty \cap W^{1,n}$. 
As it is done in \cite{Riviere-2007} for Rivi\`{e}re's general system \eqref{rivieresystem} for $n=2$, one might try to adopt Uhlenbeck's gauge transform for the case $n>2$ and \eqref{nrivieresystem}, see \cite{Uhlenbeck-1982,Wang-2005, Riviere-2007,Schikorra-2010, Miskiewicz-2016}. This could \emph{essentially} reduce \eqref{nrivieresystem} to a system with a potential $\tilde{\Omega} \in L^{n'}$ which is divergence free up to a multiplicative term which belongs to $L^\infty \cap W^{1,n}$. 

However, while this is sufficient for $n=2$ and \eqref{rivieresystem}, for $n > 2$ and \eqref{nrivieresystem} this leads to problems: for example, if we choose to work with Lorentz spaces $L^{p,q}$, and consider the right-hand side of the $H$-system. A slight extension of the div-curl estimate \cite{CLMS} coupled with Hardy-BMO-inequality gives that
\begin{equation}\label{eq:slightextensionHardyBMO}
\int H(u) \Jac u \varphi \aleq \|\nabla u\|_{L^{(n,q_1)}} \ldots \|\nabla u\|_{L^{(n,q_n)}}\ \|\nabla (H(u) \varphi)\|_{L^{(n,q_{n+1})}},
\end{equation}
where $q_i \in [1,\infty]$ and $\sum_{i=1}^{n+1} \frac{1}{q_i} = 1$. The first observation is that the test-function $\varphi$ should belong to $L^\infty$ to make the right-hand side bounded. By Sobolev embedding we should then choose in \eqref{lhsest} $Y = L^{(n,1)}$, and thus $X = L^{(n,\infty)}$. Hence, in order to at least formally match \eqref{lhsest} and \eqref{rhsest} to an estimate of the form \eqref{goal}, we need the $L^{(n,\infty)}$-norm of $\nabla u$ 
to appear at least $(n-1)$ times in \eqref{eq:slightextensionHardyBMO}. That is we should take $q_1,\ldots,q_{n-1} = \infty$. But since $\nabla H(u) \in L^n = L^{(n,n)}$, $q_{n+1}$ can be at most $n$. Thus $q_{n}$ \emph{has to be} at least $n'$. And unless $n = n' = 2$, we have $L^{(n,n')} \subsetneq L^{n}$, so that the natural assumption $u\in W^{1,n}$ does not suffice to close the argument.

That is, this kind of numerology magically fits and leads to the desired result  when $n =2$, i.e. when working with harmonic maps. But for $n > 2$ the exponents simply do not add up. This happens for all the choices for $X$ and $Y$ that we are aware of.

Let us also remark that even the additional assumption $\nabla u \in L^{(n,n')}$, would not solve this dilemma. The left-hand side estimate \eqref{lhsest} is to our best knowledge unknown for $X = L^{(n,\infty)}$ and $Y = L^{(n,1)}$. A reformulation of this additional problem is the following.

\begin{problem}\label{rieszest}
Let $\Rz = (\Rz_1,\ldots,\Rz_n)$ be the vectorial Riesz transform. Is it true or false that
\[
\|f\|_{L^{(p,\infty)}}^{p-1} \leq C\ \|\sum_{\alpha = 1}^n \Rz_{\alpha} (|\Rz f|^{p-2} \Rz_\alpha f)\|_{L^{(p',\infty)}} \, ?
\]
\end{problem}
Note that this problem can be easily solved for $p = 2$: for some constant $c \in \R$ we have $\sum_{\alpha=1}^n \Rz_\alpha \Rz_\alpha = c\, id$ and we can argue simply by $L^{2,1}$-$L^{2,\infty}$ duality. If instead of $L^{(p,\infty)}$ and $L^{(p',\infty)}$ one considers $L^{p}$ and $L^{p'}$, respectively, then it follows from the same simple duality argument. Also, if one replaces $L^{(p,\infty)}$ and $L^{(p',\infty)}$ by $L^{(r,q)}$ and $L^{(\frac{r}{p-1},\frac{q}{p-1})}$, respectively, then the analogue estimate holds whenever $|p-r|$ is small but non-zero. This uses  Iwaniec' stability theorem \cite{Iwaniec-1992} which follows from a nonlinear commutator estimate (see his monograph \cite{Iwaniec-Martin-2001}, and for an adaptation of the idea to another context \cite{Schikorra-2016}). 

Such a commutator estimate can be interpolated, but since it is nonlinear the interpolation can only take place between the space where the operator is Lipschitz (which happens exactly for $L^p$) and where the commutator is bounded. Thus one cannot obtain any interpolation estimates for the $L^{(p,q)}$-Lorentz spaces: they cannot be represented as interpolation spaces between $L^p$ and some $L^r$ (the best one can try is to work with the \emph{grand} $L^{p)}$-space, cf. \cite{Greco-Iwaniec-Sbordone-1997}).

A remark in passing: all the attempts at a proof in the 
general case try
to save the main idea of a hole--filling trick. 
Is any other approach possible?

\subsection{$H$-systems: results that use additional hypotheses on $H$ or $u$} 
For $n=2$, $H: \R^3 \to \R$, and $u \in W^{1,2}(\R^2,\R^3)$ the $H$-system is
\begin{equation}\label{eq:classichsystem}
\lap u = 2H(u)\, u_x \times u_y. 
\end{equation}
It was probably the first time that compensation effects for these kind of systems was observed, when H. Wente \cite{Wente-1969} proved what later should become to be known as Wente's inequality, see \cite{Brezis-Coron-1984,Tartar-1985}: Whenever $H$ is constant, any solution to \eqref{eq:classichsystem} is continuous.
F.~Tomi \cite{Tomi-1973} proved that bounded solutions $u$ to \eqref{eq:classichsystem} are $C^{1,\alpha}$ when $H$ is Lipschitz. 
E.~Heinz \cite{Heinz-1975} considered unbounded solutions $u$ for $H$ being Lipschitz and satisfying an extra decay-at-infinity condition.

Later on, M.~Gr\"uter \cite{Grueter-1981} proved regularity for all solutions to \eqref{eq:classichsystem} under the condition that $u$ is conformal and $H$ is bounded. Dropping the conformal parametrization, following the work on harmonic maps by H\`elein \cite{Helein-1991}, Bethuel \cite{Bethuel-1992} was able to prove regularity for possibly unbounded solutions whenever $H$ is bounded and Lipschitz.

Finally, Rivi\`{e}re \cite{Riviere-2007} proved that any $W^{1,2}$-solution $u$ to \eqref{eq:classichsystem} is Lipschitz, whenever $H$ is just bounded -- which proved a conjecture by Heinz.
For further properties and an overview we refer to \cite{DuzaarS-1996}.

For $n \geq 2$, much less is known for $W^{1,n}$-solutions $u$ to the $n$-dimensional $H$-system \eqref{hsystem}. Mou and Yang~\cite{MouY-1996a} showed continuity of \emph{conformal} solutions $u$ for $H$ is bounded. For $H$ Lipschitz and decaying at infinity,\footnote{Basically, one chooses the decay speed so that $H(u)u$ is of class $W^{1,n}\subset BMO$; this allows to cope with the fact that $BMO$ itself is not an algebra.}
Wang \cite{Wang-1999} proved that solutions of \eqref{hsystem} are continuous. For $H=\text{const}$ all solutions of class $W^{1,n}$ are regular; this is proved by the same argument as Theorem~\ref{roundsphere}, see e.g. Mou and Yang \cite{MouY-1996b}.

When $H$ is Lipschitz and $u$ not necessarily conformally parametrized, the following is known.

\begin{theorem}[Continuity under additional assumptions on $u$]\label{addassumptions} 
Let $$u=(u^1,u^2,\ldots,u^{n+1})\in W^{1,n}(\Omega,\R^{n+1})$$ be a weak solution of the $H$-system \eqref{hsystem}, where $H\colon \R^{n+1}\to \R$ is bounded and Lipschitz. If moreover one of the following conditions holds
\begin{enumerate}
\item $u \in L^\infty$
\item $u \in W^{n-1,n'}$
\item $\nabla u \in L^n \log^{n-1-\varepsilon} L$ for a certain $\eps > 0$
\end{enumerate}
then $u$ is continuous.
\end{theorem}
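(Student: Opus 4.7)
All three cases follow the hole-filling scheme of Section~3.1.1, tested against a Caccioppoli-type perturbation of $u$ itself. Fix a ball $B_r \Subset \Omega$ and a cut-off $\eta$ supported in $B_{2r}$ with $\eta \equiv 1$ on $B_r$, and take (formally) $\varphi = \eta^n(u - c_B)$, where $c_B$ is a suitable constant such as the mean of $u$ on $B_{2r}$. The left-hand side of \eqref{hsystem} then produces, up to standard cut-off errors, a Caccioppoli integral controlling $\|\nabla u\|_{L^n(B_r)}^n$. For the right-hand side we invoke the Coifman--Lions--Meyer--Semmes estimate
$$\|\Jac u\|_{\mathcal{H}^1(B_{2r})} \aleq \|\nabla u\|_{L^n(B_{2r})}^n$$
combined with $\mathcal{H}^1$--$BMO$ duality to obtain
$$\Bigl|\int H(u)\,\Jac u\cdot\varphi\Bigr| \aleq \|\Jac u\|_{\mathcal{H}^1(B_{2r})}\,\|H(u)\varphi\|_{BMO(B_{2r})}.$$
The task then boils down to extracting, from each of the three hypotheses, a quantitative smallness of the $BMO$-factor that yields a gain of the form \eqref{goal} with strictly positive $\gamma$.

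Under (1), since $u \in L^\infty$, the function $\varphi$ is a bona fide test function in $W^{1,n}_0 \cap L^\infty$; Lipschitz continuity of $H$ and the Poincar\'e--Sobolev embedding $W^{1,n}\hookrightarrow BMO$ yield
$$\|H(u)\varphi\|_{BMO(B_{2r})} \aleq \bigl(\|H\|_\infty + \mathrm{Lip}(H)\,\|u\|_{L^\infty}\bigr)\,\|\nabla u\|_{L^n(B_{2r})},$$
which vanishes as $r\to 0$ by absolute continuity. Under (2), the critical Sobolev embedding $W^{n-1,n'}(\R^n)\hookrightarrow BMO$ applied to $u$, together with Lipschitzness of $H$ and the fact that $L^\infty \cdot BMO \subset BMO$, controls the $BMO$-factor by $\|u\|_{W^{n-1,n'}(B_{2r})}$; since $u$ need not be in $L^\infty$, one must first work with a smooth truncation $\tau_T u$ making $\tau_T u-c_B$ bounded, and then let $T\to\infty$ in the $\mathcal{H}^1$--$BMO$ pairing using the extra $W^{n-1,n'}$-regularity to dominate the truncation error. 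Under (3), the Orlicz hypothesis $\nabla u \in L^n\log^{n-1-\varepsilon}L$ is a strict logarithmic refinement of $\nabla u\in L^n$; via a Poincar\'e--$BMO$ inequality in the Orlicz scale it gives a sharper oscillation estimate $\|u - \bar u_{B_r}\|_{BMO(B_r)} \aleq \|\nabla u\|_{L^n \log^{n-1-\varepsilon}L(B_{2r})}$ that produces a strict decay factor as $r\to 0$, again combined with truncation of $u$ to legitimize the test function.

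The principal obstacle in cases (2)--(3) is precisely the admissibility of $\eta^n(u - c_B)$ as a test function when $u\notin L^\infty$: one has to show that a suitable smooth truncation of $u$ converges in a topology strong enough for both the $\mathcal{H}^1$--$BMO$ pairing and the Caccioppoli manipulation to pass to the limit, with control on the correction terms uniform in the truncation level. The extra regularity in each hypothesis is tuned precisely for this passage to the limit: (2) provides it through a critical Sobolev embedding into $BMO$, (3) through an Orlicz refinement of the same embedding. Once the contraction estimate \eqref{goal} is in hand, the standard iteration from Section~3.1.1 (see, e.g., \cite[Ch.~III]{giaquinta-book}) yields a power-decay of $\|\nabla u\|_{L^n(B_r)}$ on small balls, which by Campanato's criterion translates into local H\"older continuity, and hence continuity, of $u$.
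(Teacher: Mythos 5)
Your treatment of case~(1) is essentially the paper's: test with $\eta^n(u - c_B)$, which is admissible because $u\in L^\infty$, estimate the right-hand side by Hardy--$BMO$ duality plus the product estimate $\|H(u)\varphi\|_{BMO}\aleq\|H\|_\infty[\varphi]_{BMO}+\mathrm{Lip}(H)\|\varphi\|_\infty[u]_{BMO}$, and run the hole-filling iteration. That part is fine and matches the sketch the paper gives for Proposition~\ref{uboundedHcalpha}.

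For cases (2) and (3), however, there is a genuine gap. You propose to keep $\varphi=\eta^n(u-c_B)$ as the test function, truncate $u$ at level $T$, and claim that ``the extra regularity dominates the truncation error.'' But the extra hypotheses give exactly the same conclusion as $W^{1,n}$ at the $BMO$ level -- both $W^{1,n}$ and $W^{n-1,n'}$ are critical and embed into $BMO$, and so does the Orlicz class in (3) -- so they provide no new bound on the test function, which remains genuinely unbounded. The truncation errors are not addressed: both sides of the equation acquire terms supported on $\{|u|>T\}$, involving $|\nabla u|^n$ and $\Jac u \cdot u$, and no mechanism is given by which a critical-scaling norm controls their decay as $T\to\infty$; moreover the $\mathcal{H}^1$--$BMO$ pairing against an unbounded function is itself only defined through a limiting procedure that must be justified. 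The paper avoids this problem entirely: the actual proofs (Kolasi\'nski for (2), sketched in Proposition~\ref{w22}; Schikorra for (3)) do \emph{not} test with $u$. Instead the test function $\varphi$ is constructed from the Hodge decomposition $|\nabla u|^{-\eps}\nabla u=\nabla\varphi+V$, so that $\nabla\varphi\in L^{(n-\eps)/(1-\eps)}$ with exponent strictly greater than $n$, giving $\varphi\in L^\infty$ by Sobolev embedding, while the Iwaniec stability theorem makes the error $V$ small as in \eqref{eps-stability}. The extra regularity (2) or (3) is then used for a different purpose altogether: after a Hodge decomposition of $H(u)\,du^1$ and the Coifman--Rochberg--Weiss commutator estimate, the ``bad'' coexact term $\delta\beta$ is absorbed by the sharp Gagliardo--Nirenberg inequality $\int|\nabla u|^n\aleq [u]_{BMO}^{n-2}\int|D^{n/2}u|^2$ in case (2), respectively by Trudinger's inequality in the Orlicz scale in case (3). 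None of these ingredients appears in your proposal, and without them the argument does not close for unbounded~$u$.
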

The first statement, regularity of \emph{bounded} weak solutions for $n>2$, is due to
Duzaar-Fuchs \cite{DuzaarF-1991a}. We sketch a proof of a slightly generalized version below, see Proposition~\ref{uboundedHcalpha}. The second statement is due to S. Kolasi\'nski, \cite{Kolasinski-2010} who combined 
Hodge decomposition, 
the Iwaniec stability theorem \cite{Iwaniec-1992}, and sharp Gagliardo-Nirenberg estimates due to the second author \cite{Strzelecki-2006}. We describe a slight improvement on Kolasi\'nski's argument below in Proposition~\ref{w22}. The third statement in Theorem~\ref{addassumptions} is due to the first-named author \cite{Schikorra-2013}. Instead of using the Gagliardo-Nirenberg-estimate, exponential Orlicz-spaces and Trudinger's inequality are used. 

Observe that for $n=2$, the second condition
reduces to the usual assumptions. All of the conditions above do not trivialize the problem: one can easily construct systems with the same growth properties that have solutions with singularities of the type $\log \log 1/|x|$.

If, as in the first statement above, all coordinate functions are bounded, one can essentially test the equation with $u$ itself (we use this for the proof of Proposition~\ref{uboundedHcalpha} below). This is not possible any more if $u$ is unbounded: since the right-hand side is merely integrable, all test functions have to be in $W^{1,n} \cap L^\infty$. This is also related to the fact that $W^{1,n}$ is not an algebra, but $W^{1,n} \cap L^\infty$ is. 

A remark in passing: due to the Jacobian term on the right-hand side of \eqref{hsystem} one might hope that it suffices to assume that \emph{only some} coordinate functions are bounded,  integrating by parts on the right-hand side. However the following is still open:
\begin{problem} Let $u=(u^1,u^2,\ldots,u^{n+1})\in W^{1,n}(\Omega,\R^{n+1})$ be a weak solution of the $H$-system \eqref{hsystem}, where $H\colon \R^{n+1}\to \R$ is bounded and Lipschitz. Assume that $k$ of the coordinate functions $u^i$ are bounded, where $k\in \{1,2,\ldots,n\}$. Is $u$ continuous?
\end{problem}

\subsection{How could a counterexample look like?}  
Duzaar and Fuchs \cite{Duzaar-Fuchs-1990} have proved that isolated singular points of $n$-harmonic maps are removable. Therefore, a possible counterexample to regularity would have to be singular on a perfect set; it is hard to imagine how a construction of such an example might look like when no nesting of isolated singular points is possible. 
                                                           
On the other hand, C. Wang \cite{Wang-2005}, see also \cite{Miskiewicz-2016}, and the second named author with A. Zatorska-Goldstein \cite{StrzeleckiZG-2004} have proved that for  \eqref{phar} and \eqref{hsystem} -- and for the corresponding approximate problems with a small perturbation in $(W^{1,n})^\ast$ added to the right hand side -- the spaces of weak solutions are closed in the weak topology of $W^{1,n}$. We do not know whether these results could be used to construct counterexamples from sequences of singular solutions to perturbed versions of \eqref{phar} and \eqref{hsystem}.

\subsection{New observations}

If $u$ is a bounded solution to the $H$-system, actually any H\"older continuous $H$ suffices to obtain continuity (which is a slight improvement from Theorem~\ref{addassumptions} where Lipschitz-continuity is required).
\begin{fact}\label{uboundedHcalpha}
Assume that $u\in W^{1,n}(B^n,\R^{n+1})$ is a bounded weak solution 
of \eqref{hsystem}, where $H\colon \R^{n+1}\to\R$ is 
a bounded function of class  $C^\alpha$ for some 
$\alpha\in (0,1)$. Then, $u\in C^\beta_{\text{loc}}$ for some $\beta >0$.	
\end{fact}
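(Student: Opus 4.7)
The plan is an $\epsilon$-regularity argument combined with absolute continuity of $\int|\nabla u|^n$. I aim to show that on a ball $B_r(x_0)\subset B^n$ with $\|\nabla u\|_{L^n(B_{2r})}$ sufficiently small, one has Morrey-type decay $\int_{B_\rho(x_0)}|\nabla u|^n\lesssim\rho^{n\beta}$ for $\rho\leq r$, whence $u\in C^\beta_{\mathrm{loc}}$ near $x_0$ by Campanato's criterion. Since $\nabla u\in L^n(B^n)$, this smallness hypothesis holds in a neighborhood of every point, delivering the conclusion globally.

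For the decay estimate, fix $B_r=B_r(x_0)$, set $c:=\bar u_{B_r}$, and choose a cutoff $\eta\in C_c^\infty(B_r)$ with $\eta\equiv 1$ on $B_{r/2}$ and $|\nabla\eta|\leq 2/r$. Boundedness of $u$ renders $\varphi^i:=(u^i-c^i)\eta^n$ an admissible test function in $W^{1,n}_0\cap L^\infty$. Testing \eqref{hsystem} against $\varphi^i$ and summing over $i$, the left-hand side produces $\int_{B_r}|\nabla u|^n\eta^n$ plus a boundary commutator supported on the annulus $A_r:=B_r\setminus B_{r/2}$ that is controlled via Poincar\'e and Young by $\int_{A_r}|\nabla u|^n$. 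On the right-hand side $\int_{B_r}H(u)\bigl((u-c)\cdot\Jac u\bigr)\eta^n$, I decompose $H(u)=H(c)+(H(u)-H(c))$ and treat the two pieces separately.

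The frozen piece $H(c)\int(u-c)\cdot\Jac u\cdot\eta^n$ uses the Piola identity for the cone map $v(x,t):=c+t(u(x)-c)$, yielding a divergence representation $(u-c)\cdot\Jac u=\partial_\alpha\xi^\alpha$ with $|\xi^\alpha|\lesssim|u-c|^2|\nabla u|^{n-1}$. Integration by parts against $\eta^n$ localizes the contribution to $A_r$, and H\"older combined with the Sobolev-Poincar\'e estimate $\|u-c\|_{L^{2n}(B_r)}\lesssim r^{1/2}\|\nabla u\|_{L^n(B_r)}$ bounds it by a multiple of $\|\nabla u\|_{L^n(B_r)}^{n+1}$. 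For the $C^\alpha$-perturbation, the pointwise bound $|H(u)-H(c)|\,|(u-c)\cdot\Jac u|\leq[H]_{C^\alpha}|u-c|^{1+\alpha}|\nabla u|^n$, combined with Gehring-type higher integrability ($\nabla u\in L^{n+\delta}_{\mathrm{loc}}$ for some $\delta>0$, available since $|H(u)\Jac u|\lesssim|\nabla u|^n$) and Sobolev-Poincar\'e, yields an upper bound of the form $r^\gamma\|\nabla u\|_{L^n(B_r)}^{1+\alpha}\|\nabla u\|_{L^{n+\delta}(B_r)}^n$ for some $\gamma>0$.

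The collected estimates, paired with the hole-filling identity $\int_{B_r}=\int_{B_{r/2}}+\int_{A_r}$, yield in the small-energy regime a decay
\[
\int_{B_{r/2}}|\nabla u|^n\leq\tau\int_{B_r}|\nabla u|^n+Cr^\gamma,\qquad\tau\in(0,1),
\]
from which the standard iteration lemma produces the desired Morrey decay. The main obstacle is the $C^\alpha$-perturbation: for Lipschitz $H$ (the Duzaar-Fuchs setting) the chain rule $\nabla H(u)=H'(u)\nabla u\in L^n$ places $H(u)\Jac u$ directly in $\mathcal{H}^1$ via a Coifman-type product estimate, so pairing against $(u-c)\eta^n\in BMO$ is straightforward. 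For $H$ only H\"older continuous, $H(u)\notin W^{1,n}$ in general, and the substitute route via Gehring's higher integrability together with H\"older-Sobolev interpolation---designed to extract the $r^\gamma$ factor---is the essence of the present generalization.
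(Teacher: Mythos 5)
Your proposal takes a genuinely different route from the paper, but it contains a gap that I don't see how to close as written.

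The paper's argument stays entirely inside the Hardy--BMO framework. The whole point is the observation that the chain-rule bound $[H(u)]_{BMO}\lesssim\|\nabla H\|_{\infty}[u]_{BMO}$ used in the Lipschitz case has a H\"{o}lder substitute, namely $[H(u)]_{BMO}\lesssim[H]_{C^\alpha}[u]^\alpha_{BMO}$, which follows from the equivalence of the $BMO_p$ seminorms ($1<p<\infty$). Together with the product rule $[fg]_{BMO}\lesssim\|f\|_\infty[g]_{BMO}+\|g\|_\infty[f]_{BMO}$ this gives $[H(u)\varphi]_{BMO}\lesssim\|\varphi\|_\infty[H]_{C^\alpha}[u]^\alpha_{BMO}+\|H\|_\infty[\varphi]_{BMO}$. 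Pairing $\Jac u\in{\mathcal{H}^1}$ against this BMO function, choosing $\varphi$ a cutoff of $u$ (legitimate because $u$ is bounded), and observing that $[u]_{BMO,B_{2r}}\lesssim\|\nabla u\|_{L^n(B_{2r})}$ is small by absolute continuity yields the absorbable right-hand side and then the decay estimate \eqref{goal}. There is no freezing, no Gehring, no Piola identity. Your scheme instead freezes $H$ at $c=\bar u_{B_r}$, uses the null-Lagrangian divergence structure of $(u-c)\cdot\Jac u$ to kill the $H(c)$ piece (that part is fine and correctly estimated), and then tries to absorb the $C^\alpha$-perturbation by a Gehring argument. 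Up to that last step the two proofs are structurally parallel (hole-filling plus iteration), and the frozen piece estimate is a nice alternative to Hardy--BMO duality that genuinely avoids the commutator/BMO machinery.

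The gap is the assertion that $\nabla u\in L^{n+\delta}_{\mathrm{loc}}$ for some $\delta>0$ is ``available since $|H(u)\Jac u|\lesssim|\nabla u|^n$''. This is false as stated. For equations of the form $-\dv(|\nabla u|^{n-2}\nabla u)=f$ with $|f|\le C|\nabla u|^n$, Gehring-type higher integrability is \emph{not} a free consequence of the growth condition: the Caccioppoli inequality one would need, obtained by testing with $(u-c)\eta^n$, produces a term $\int|u-c|\,|\nabla u|^n\eta^n\le\|u-c\|_{L^\infty(B_r)}\int|\nabla u|^n\eta^n$, and $\|u-c\|_{L^\infty(B_r)}\le 2\|u\|_\infty$ is a fixed constant, not small --- boundedness of $u$ gives no local oscillation control before continuity is established. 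One cannot absorb this term without a smallness hypothesis on $\|H\|_\infty\|u\|_\infty$, which is not assumed. Splitting $H(u)=H(c)+(H(u)-H(c))$ inside the Caccioppoli argument does not help either: the $H(c)$ piece localizes to the annulus as you observe, but the $C^\alpha$-perturbation contributes $[H]_{C^\alpha}\|u-c\|_{L^\infty(B_r)}^{1+\alpha}\int|\nabla u|^n\eta^n$, which has the same non-absorbable structure. Morally, the conclusion you want --- local higher integrability of $\nabla u$ --- is \emph{equivalent} to H\"{o}lder continuity here, so assuming Gehring at the outset makes the argument circular. (The paper itself hints at $\log\log$-type constructions that satisfy the same growth bound and fail higher integrability.) To repair the perturbation estimate you essentially have to come back to the BMO route: pair $\Jac u$ against $(H(u)-H(c))(u-c)\eta^n$ via Hardy--BMO duality, which gives the factor $[H(u)]_{BMO}\lesssim[H]_{C^\alpha}[u]^\alpha_{BMO}$ --- precisely the paper's key observation --- in place of the $\|u-c\|_\infty^{1+\alpha}$ you are forced to use pointwise.
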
            

The proof is based on the fact that all $BMO_p$--norms 
are equivalent for $1<p<\infty$. Thus for any $\alpha \in (0,1]$,
\[
[H(u)]_{BMO} \aleq [H]_{C^{\alpha}} [u]^\alpha_{BMO}.
\]
Since moreover
\[
[fg]_{BMO} \aleq \|f\|_{L^\infty} [g]_{BMO} + \|g\|_{L^\infty} [f]_{BMO},
\]
we can estimate
\[
[H(u)\varphi]_{BMO} \aleq \|\varphi\|_\infty [H]_{C^{0,\alpha}} [u]^\alpha_{BMO} + \|H\|_{L^\infty}\ [\varphi]_{BMO}.
\]
With the equation \eqref{hsystem} we then have the estimate
\[
\int |\nabla u|^{n-2}\nabla u \nabla \varphi \aleq \|\nabla u\|_{n}^n \left (\|\varphi\|_\infty [H]_{C^{0,\alpha}} [u]^\alpha_{BMO} + \|H\|_{L^\infty}\ [\varphi]_{BMO} \right ).
\]
Since $u$ is bounded, we 
we can proceed as explained above for the equation \eqref{phar-s}. Just pick $\varphi$ to be a cut-off version of $u$ to obtain \eqref{goal}.\qed

\begin{fact} \label{w22} If $n=4$ and $u\in W^{2,2}(B^4,\R^5)$ is a weak solution of \eqref{hsystem}, with $H$ Lipschitz and bounded, then,  $u\in C^\beta_{\text{loc}}$ for some $\beta >0$. 
\end{fact}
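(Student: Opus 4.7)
The plan is to follow Kolasi\'nski's argument for the second statement of Theorem~\ref{addassumptions} --- Hodge decomposition of $|\nabla u|^{n-2}\nabla u$ combined with CLMS-type Hardy space control and Iwaniec's stability theorem --- but to run it under the strictly weaker hypothesis $u\in W^{2,2}$ in place of his $u\in W^{n-1,n'}=W^{3,4/3}$. The extra mileage we get from $W^{2,2}$ in the critical dimension $n=4$ comes not from $\nabla u$ (already in $L^{4}$ for both classes, by Sobolev) but from $u$ itself: the critical embedding $W^{2,2}(B^4)\hookrightarrow BMO(B^4)$, density of smooth functions in $W^{2,2}$ (so that $u\in VMO_{\mathrm{loc}}$), and Trudinger-type exponential Orlicz integrability. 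These substitute for the essential boundedness of $u$ used in Fact~\ref{uboundedHcalpha}. In particular, $H(u)\in L^{\infty}\cap W^{1,4}\cap VMO_{\mathrm{loc}}$, with $[H(u)]_{BMO,B_{r}}\to 0$ as $r\to 0$.

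I would then proceed in three steps. \emph{Step~1.} On a small ball $B_{2r}$, Hodge-decompose $|\nabla u|^{2}\nabla u^{k}=\nabla A^{k}+B^{k}$ with $\dv B^{k}=0$; the $H$-system yields
\[
\Delta A^{k}=-H(u)\,(\Jac u)^{k}.
\]
By the Coifman-Lions-Meyer-Semmes theorem, $(\Jac u)^{k}\in\mathcal{H}^{1}$ with $\|\Jac u\|_{\mathcal{H}^{1}}\les\|\nabla u\|_{4}^{4}$, so \emph{formally} $A^{k}$ should be one derivative better than the naive $L^{4/3}$. \emph{Step~2.} Transfer this $\mathcal{H}^{1}$-gain through the multiplication by $H(u)$ via a Coifman-Rochberg-Weiss-type commutator estimate: since $[H(u)]_{BMO,B_{r}}$ can be made small by shrinking $r$, one upgrades $\nabla A$ from $L^{4/3}$ on $B_{r}$ into a strictly better space --- say a Lorentz space $L^{(4/3,q)}$ with $q<4/3$ --- with a prefactor that vanishes as $r\to 0$. \emph{Step~3.} Apply Iwaniec's stability theorem to the nonlinear map $V\mapsto|V|^{2}V$ (exactly as in Kolasi\'nski's original argument and as invoked in the discussion of Problem~\ref{rieszest}): the improvement of $|\nabla u|^{2}\nabla u=\nabla A+B$ transfers into a matching improvement of $\nabla u$ itself in a nearby Lorentz space $L^{(4,q')}$. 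Iterating the resulting decay inequality on shrinking balls produces $\int_{B_{r}}|\nabla u|^{4}\les r^{\sigma}$ for some $\sigma>0$, whence $u\in C^{0,\sigma/4}_{\mathrm{loc}}$ by Morrey's embedding.

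The hard part will be Step~2: we sit exactly on the $\mathcal{H}^{1}$-$BMO$ duality endpoint, at which general BMO functions fail to be $\mathcal{H}^{1}$-multipliers, and this is precisely the type of endpoint commutator issue the authors single out in the abstract. What rescues us here is the conjunction of two small pieces of slack. First, we only require \emph{smallness} --- not absolute control --- of the local BMO norm of $H(u)$, and that smallness is provided for free by $u\in W^{2,2}\subset VMO_{\mathrm{loc}}$. Second, because $W^{2,2}$ is strictly stronger than $W^{1,4}$, we have room to step off the borderline $L^{4}$ into a nearby Lorentz space, where Iwaniec's nonlinear commutator theorem is known to apply. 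It is the conjunction of these two flexibilities --- both absent in the bare $W^{1,4}$ setting --- that is expected to close the hole-filling iteration and deliver a H\"older exponent $\beta>0$.
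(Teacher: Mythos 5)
Your proposal differs from the paper's proof in several essential ways and, more importantly, has a genuine gap precisely at the point you label Step~2.

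The paper's proof (a) applies the Hodge decomposition to \emph{two different objects} than you do: first to $|\nabla u|^{-\eps}\nabla u$, in order to manufacture a \emph{test function} $\varphi$ whose oscillation is controlled (with Iwaniec stability controlling the divergence-free error), and second to $H(u)\,du^1 = d\alpha+\delta\beta$, to split the right-hand side. The co-exact piece $\delta\beta = [H,T](du^1)$, with $T$ the exact projector, is then estimated by the \emph{classical} Coifman--Rochberg--Weiss theorem for a genuine Calder\'on--Zygmund operator, giving $\|\delta\beta\|_{L^4}\aleq [u]_{BMO}\|\nabla u\|_{L^4}$ --- a clean $L^n\to L^n$ bound, not an $\mathcal{H}^1$-multiplier statement. (b) The heart of the use of $u\in W^{2,2}$ is the \emph{sharp Gagliardo--Nirenberg inequality} $\int_{B}|\nabla u|^4 \aleq [u]_{BMO,B}^{2}\int_B |D^2u|^2$ from \cite{Strzelecki-2006}, which lets the $\delta\beta$-term be absorbed into the iteration because the BMO factor decays. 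This inequality is the key lemma you do not mention, and without it the bookkeeping in the critical dimension does not close.

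Your Step~2 asks for something stronger and, as stated, unavailable: a CRW-type bound transferring the $\mathcal{H}^1$-gain of $\Jac u$ through multiplication by $H(u)$, rescued by the smallness of $[H(u)]_{BMO,B_r}$. But smallness of the BMO seminorm does not make a bounded function an $\mathcal{H}^1$-multiplier; $L^\infty\cap VMO$ is not sufficient (one needs a logarithmic BMO-type condition), which is exactly the endpoint pathology the paper singles out and then \emph{avoids} by the argument above. You also misidentify the extra leverage coming from $W^{2,2}$: $u\in VMO_{\mathrm{loc}}$ and $[H(u)]_{BMO,B_r}\to 0$ already follow from $u\in W^{1,4}(B^4)$, so they cannot be the decisive gain. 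What $W^{2,2}$ actually buys over $W^{1,4}$ is $D^2u\in L^2$ (together with the sharp embedding $W^{1,2}(B^4)\hookrightarrow L^{(4,2)}$ for $\nabla u$), and the paper cashes this in through the Gagliardo--Nirenberg inequality, not through a Lorentz-scale ``room to step off $L^4$'': note that $L^{(4,2)}$ is still strictly larger than the $L^{(4,4/3)}$ that the Lorentz-space numerology discussed around \eqref{eq:slightextensionHardyBMO} would require. Finally, your Step~3 invokes Iwaniec stability for the reverse map $V\mapsto|V|^{2}V$ without the required quantitative statement of which Lorentz improvement of $\nabla A$ is available, so the iteration is not set up. To repair the argument, replace Step~2 by the Hodge decomposition of $H(u)\,du^1$ plus the CRW bound for $[H,T]$, and make the $\delta\beta$-term absorbable via the sharp Gagliardo--Nirenberg inequality $\int|\nabla u|^4\aleq[u]_{BMO}^2\int|D^2u|^2$.
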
              

Again, let us explain the argument briefly; an interested reader will easily fill in all the missing technical details. The three essential tools are: the Hodge decomposition (including a stability theorem due to Iwaniec); the Coifman--Rochberg--Weiss commutator theorem~\cite{CRW}, and the Gagliardo--Nirenberg inequalities in a sharp form, involving $BMO$ norms instead of $L^\infty$ \cite{Strzelecki-2006}.

Fix a small parameter $\eps>0$. Assuming w.l.o.g. that $u$ is compactly supported in $B_{2r}$, we construct the test function $\varphi$ via the Hodge decomposition,
\[
|\nabla u|^{-\eps}\nabla u = \nabla \varphi + V,
\]
where the divergence-free term $V$ is small due to the Stability Theorem \cite{Iwaniec-1992},
\begin{equation}
\label{eps-stability}
\| V\|_{L^{(n-\eps)/(1-\eps)}} \aleq \eps \|\nabla u\|_{L^{n-\eps}}\, .
\end{equation}
The main point is that $(n-\eps)/(1-\eps)>n$, and thus by Sobolev imbedding 
\begin{equation}
\text{osc}\, \varphi \aleq r^{(n-1)\eps/(n-\eps)} \biggl(\int_{B_{2r}}|\nabla u|^{n-\eps}\biggr)^{\frac{1-\eps}{n-\eps}}.
\end{equation}

After a routine argument (let us assume here for the sake of simplicity that also $\varphi$ is compactly supported), the left-hand side of the equation, due to the stability estimate \eqref{eps-stability}, gives
\begin{equation}
\label{lhs-Hodge}
\int |\nabla u|^{n-2}\nabla u \nabla \varphi \ge \int_{B_r} |\nabla u|^{n-\eps} - \tau \int_{B_{2r}} |\nabla u|^{n-\eps}
\end{equation}
with $\tau = \tau (n,\eps)\in (0,1)$.

To estimate the right hand side, we use the Hodge decomposition again, this time in $L^n$. It is convenient here to use the language of differential forms. We write the $H$-system as \eqref{H-forms} and work separately with each of the equations of the system. W.l.o.g. let $i=n+1$; write
\[
H(u)\, du^1=d\alpha + \delta\beta\, .
\]
Let $T$ denote the linear operator which maps a 1-form to the exact component of its Hodge decomposition; i.e. $T\big(H(u)du^1\big)=d\alpha$. Then,
the coexact term 
\[
\delta\beta= H(u)T(du^1)-T\big(H(u)du^1\big)=[H,T](du^1),
\]where $[H,T]$ is the commutator of $T$ and the multiplication by $H(u)$. Since $H$ is Lipschitz, by the Coifman-Rochberg-Weiss commutator theorem~\cite{CRW}
\[
\|\delta\beta\|_{L^n}\aleq [u]_{BMO} \|\nabla u\|_{L^n}\, .
\]
Now, we split the right-hand side
as
\[
\int \varphi H(u) \, du^1\wedge \ldots \wedge du^4 = \int \varphi\, d\alpha \wedge du^2\wedge du^3 \wedge
du^4
+\int \varphi\, \delta\beta \wedge du^2\wedge du^3 \wedge du^4\, .
\]
The first term, after one integration by parts -- taking $d$ from one of the $u^j$ to $\varphi$ -- and an application of the duality of the Hardy space and $BMO$, is controlled by a constant multiple of
\[
[u]_{BMO} \|\nabla \varphi\|_{L^{(4-\eps)/(1-\eps)}} \|\nabla u\|^3_{L^{4-\eps}} \aleq [u]_{BMO,B_{2r}} 
\int_{B_{2r}} |\nabla u|^{4-\eps}\, .
\]
The assumptions $n=4$ and $u\in W^{2,2}$ are crucial in the estimate of the second term, the one containing $\delta\beta$. We have
\begin{multline*}
\biggl|\int \varphi\, \delta\beta \wedge du^2\wedge du^3 \wedge du^4\biggr| 
 \le  \|\varphi \|_{L^\infty} \|\delta\beta\|_{L^4}\|\nabla u\|_{L^4}^3\\
 \aleq  r^{3\eps/(4-\eps)} \biggl(\int_{B_{2r}}|\nabla u|^{4-\eps}\biggr)^{\frac{1-\eps}{4-\eps}}
[u]_{BMO,B_{2r}} \int_{B_{2r}} |\nabla u|^4+\text{L.O.T.}\\
 \aleq  r^{3\eps/(4-\eps)} \biggl(\int_{B_{2r}}|\nabla u|^{4-\eps}\biggr)^{\frac{1-\eps}{4-\eps}}
[u]_{BMO,B_{2r}}^3\int_{B_{2r}} |D^2 u|^2 +\text{L.O.T.}\, 
\end{multline*}
In the last step, we used the sharp Gagliardo-Nirenberg inequality \cite{Strzelecki-2006}; L.O.T. stands for unimportant lower order terms.
Now, by Poincar\'{e} inequality, we have
\[
[u]_{BMO,B_{2r}} \aleq M(p,2r):=\sup_{B(z,\varrho)\subset B_{2r}} \biggl(\frac{1}{\varrho^{n-p}}\int_{B(z,\varrho)} |\nabla u|^p\biggr)^{1/p}\, .
\]
Thus, for $p=n-\eps=4-\eps$ the estimate of the second term can be written as
\begin{equation}
\label{2term}
\biggl|\int \varphi\, \delta\beta \wedge du^2\wedge du^3 \wedge du^4\biggr| \aleq r^\eps \int |D^2u|^2\, \cdot\, M(4-\eps,2r)^{4-\eps}\, .
\end{equation}
Combining this with the estimate of the first term of the right side, we finally obtain
\begin{equation}
\biggl|\int \varphi H(u) \, du^1\wedge \ldots \wedge du^4\biggr| \aleq r^\eps \biggl(
[u]_{BMO,B_{2r}} +
\int_{B_{2r}} |D^2 u|^2\,\biggr) M(4-\eps,2r)^{4-\eps}\, .
\end{equation}
Going back to \eqref{lhs-Hodge}, dividing both sides by $r^\eps$, and using the monotonicity of $M(p,2r)$ 
as a function of the ball, one is now able to use a standard iterative argument and prove that
\[
\frac{1}{r^\eps}\int_{B_r} |\nabla u|^{4-\eps} \aleq r^\sigma \qquad\mbox{for a fixed $\sigma>0$ and all radii $r<r_0$.}
\]
By the Dirichlet Growth Theorem, $u$ must be H\"{o}lder continuous.

\begin{remark}
One can check that the above argument, generalized to even dimensions $n=2k$ bigger then 4, yields the following:

\smallskip

\emph{If $n=2k$ is even and $u\in W^{n/2,2}(B^n,\R^{n+1})$ is a weak solution of \eqref{hsystem}, with $H$ Lipschitz and bounded, then,  $u\in C^\beta_{\text{loc}}$ for some $\beta >0$. }
\end{remark}

\begin{remark}
It is easy to see that the argument used to prove Proposition~\ref{w22} breaks down if we assume that $H$ is only $C^{0,\alpha}$ instead of Lipschitz: if $H\in C^{0,\alpha}$, then the exponent of $M(4-\eps,2r)$ in  \eqref{2term} is in fact $2+(1-\eps)+\alpha$, so that for $\alpha <1$ the estimates of both sides simply do not match.
\end{remark}

\section{Approaches based on non-local problems and commutator estimates}

\subsection{Questions concerning commutators.} One of the methods that allow to bypass the duality of Hardy space and BMO in the proofs of regularity of solutions to similar problems involves, roughly speaking, representing the test function as a Riesz potential of its gradient. The operator
\[
If(x)=\int_{\R^n} K(x-y) f(y)\, dy
\]
where $K(x)=c_n x/|x|^n$ is the gradient of the fundamental solution of the Laplacian, is bounded on the Hardy space, $I\colon{\mathcal{H}^1}(\R^n)\to L^{n/(n-1)}(\R^n)$, see \cite{Fefferman-Stein-1972,Semmes-1994}. Variants of that approach have been successfully applied in \cite{Hajlasz-Strzelecki-1998} or \cite{Strzelecki-2003c}.

However, when dealing e.g. with \eqref{hsystem} and using a compactly supported test function $\varphi = I(\nabla \varphi)$, one immediately arrives at the expressions of the form
\begin{multline*}
\biggl|
\int \varphi H(u)\, du^1\wedge\ldots du^n 
\biggr|
 = 
\biggl|
\int \nabla\varphi\cdot I\bigl(H(u) \, du^1\wedge\ldots du^n\bigr) \biggr|\\
 \le 
\biggl|
\int H(u)\nabla\varphi\cdot I \bigl( du^1\wedge\ldots du^n\bigr) \biggr| + \biggl|
\int \nabla\varphi\cdot [H(u),I] \bigl( du^1\wedge\ldots du^n\bigr) \biggr|
\end{multline*}
The key term here is the commutator $[b,I]$ of $I$ and the multiplication by $b=H(u)$. It is known that such commutators are not bounded on the Hardy space, cf. \cite{Harboure-etal-1997}, but are bounded on certain subspaces of ${\mathcal{H}^1}(\R^n)$, cf. Perez \cite{Perez-1995} and  Ky \cite{Ky-1995}. Differentiation of $[b,I]f$ gives the term $\nabla b\cdot If$ plus a commutator $[b,S]f$, where $S$ is a singular integral operator. Thus, the following general problem -- which might be of independent interest to harmonic analysts -- is linked with the regularity questions that we consider here.

\begin{problem} Fix $H$  bounded and Lipschitz on $\R^{n+1}$, and $u \in W^{1,n}(\R^n,\R^{n+1})$, so that $b=H(u)\in W^{1,n}(\R^n) \cap L^\infty(\R^n)$.
\begin{enumerate}[(a)] 
\item What is the largest subspace $V\subset {\mathcal{H}^1}(\R^n)$ such that $[b,I]\colon V\to L^{n/(n-1)}(\R^n)$?  What is the answer if we replace $L^{n/(n-1)}(\R^n)$ by $W^{1,1}(\R^n)$? Does this $V$ contain the $n\times n$ minors of $Du$?
\item  What is the largest subspace $V\subset {\mathcal{H}^1}(\R^n)$ such that all commutators $[b,\Rz_\alpha]$ of the multiplication by $b$ with the Riesz transforms $\Rz_\alpha$, $\alpha=1,\ldots,n$, map $V\to L^{1}(\R^n)$? Does $V$ contain all the $n\times n$ minors of $Du$?
\item Does $du^1\wedge\ldots\wedge du^n$ have an atomic decomposition into $b$-atoms, i.e. standard atoms that are, in addition, orthogonal to $b$? (Cf. \cite{Perez-1995} for definitions.)
\item Do the answers to the above questions change if $u$ is not just an element of $W^{1,n}$, but also a solution to an $H$-system?
\end{enumerate}
\end{problem}
The characterizations in Perez \cite{Perez-1995} and  Ky \cite{Ky-1995} do not seem to be directly applicable here: formally, there are abstract definitions of $V$'s on which the commutators of fractional or singular integrals are bounded \emph{but} there is no clear, practical way of telling whether a given Jacobian belongs to such a space or not.

Again, a positive answer to questions (a)--(c) would pave the way to Problems~\ref{hsystempb} and~\ref{n-harmonic}.

\subsection{Systems with antisymmetric potentials.} For $n=2$ Rivi\`{e}re \cite{Riviere-2007} observed that Euler-Lagrange systems of conformally invariant variational functionals and in particular the $H$-system can be brought into the form of  \eqref{rivieresystem}. This allowed him to conclude regularity  for solutions if $H$ is bounded. 

This reformulation is also possible for $n \neq 2$ for harmonic maps and the $H$-system, see \cite[eq. III.23]{Riviere-2011}, where \eqref{rivieresystem} becomes \eqref{nrivieresystem}.
But even for toy cases (see below) it does not seem clear how solutions would regularize when $\Omega$ belongs merely to $L^n$.

For the $H$-system with Lipschitz $H$, one can split the right-hand side of the equation into a part which has a purely determinant structure and a part with an antisymmetric potential which is in the smaller Lorentz space $L^{(n,n/2)}\subsetneq L^n$.

\begin{fact}\label{RHSdecomposition}
Let $u \in W^{1,n} (\R^n,\R^{n+1})$ be a solution to the $H$-System \eqref{hsystem} for some $H$ satisfying $\|H\|_{\infty} + \|\nabla H\|_{\infty}  < \infty$. Then we can write 
\[
 \dv(|\nabla u|^{n-2}\nabla u^i) = A + B,
 \]
for $A$ and $B$ as follows: $A$ is a sum over determinants of the form
\[
\det\nolimits_{n \times n}(\nabla \omega^{i_1},\nabla u^{i_2},\ldots,\nabla u^{i_n}),
\]
for some $\omega \in W_{loc}^{1,n}(\R^n,\R^{n+1})$.
$B$ can be written  as 
\[
B = \Omega_{ij}\cdot |\nabla u|^{n-2} \nabla u^j,
\]
with an $\Omega \in L_{loc}^{(n,\frac{n}{2})}(\R^n,so(n+1)\otimes \R^n)$.
\end{fact}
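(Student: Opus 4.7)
My plan is to derive the splitting by a single product-rule manipulation on one column of the Jacobian, followed by an antisymmetrization that leverages the wedge-product structure. For each $i$, put $\{j_1,\ldots,j_n\}=\{1,\ldots,n+1\}\setminus\{i\}$, so that $(\Jac u)^i=\pm\det(\nabla u^{j_1},\ldots,\nabla u^{j_n})$. The key algebraic identity is
\[
H(u)\,\nabla u^{j_1}\;=\;\nabla\bigl(H(u)u^{j_1}\bigr)-u^{j_1}\,\nabla H(u),\qquad \nabla H(u)=\partial_kH(u)\,\nabla u^k,
\]
which, inserted into the first column and combined with the multilinearity of the determinant, produces
\[
H(u)(\Jac u)^i\;=\;\pm\det(\nabla\omega^{j_1},\nabla u^{j_2},\ldots,\nabla u^{j_n})\;\mp\;\det(u^{j_1}\nabla H(u),\nabla u^{j_2},\ldots,\nabla u^{j_n}),
\]
with $\omega^{j_1}:=H(u)u^{j_1}$. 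The first summand is already a determinant of the type required for $A$; verifying $\omega\in W^{1,n}_{\mathrm{loc}}$ reduces to $u^{j_1}\nabla H(u)\in L^n_{\mathrm{loc}}$, which sits at the endpoint of the Sobolev scale and is established (together with the stronger Lorentz assertion about $\Omega$ below) by a paraproduct refinement exploiting the chain-rule factorization $\nabla H(u)=\partial_kH(u)\nabla u^k$.

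The second summand becomes $B$. Expanding $\nabla H(u)$ inside that determinant, only $k\in\{i,j_1\}$ avoids producing a repeated column, so up to signs
\[
B\;=\;-u^{j_1}\bigl[\partial_iH(u)(\Jac u)^{j_1}+\partial_{j_1}H(u)(\Jac u)^i\bigr].
\]
I would then use the cofactor expansion to extract one $\partial_\alpha u^j$ out of each remaining $(\Jac u)^{\cdot}$, bounding the residual $(n-1)$-minor pointwise by $|\nabla u|^{n-2}$ times one extra gradient factor, and write $B=\Omega_{ij}^\alpha\,|\nabla u|^{n-2}\,\partial_\alpha u^j$. The skew-symmetry $\Omega_{ij}^\alpha=-\Omega_{ji}^\alpha$ comes from the graded antisymmetry of $dH(u)\wedge du^{j_2}\wedge\cdots\wedge du^{j_n}$ under the swap $i\leftrightarrow j_1$; it is transparent if one phrases the whole argument in the differential-form formalism, once one has symmetrised over the $n$ possible choices of the privileged column from which the gradient is extracted.

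The improvement $\Omega\in L^{(n,n/2)}_{\mathrm{loc}}\subsetneq L^n$ is the structural gain of the splitting. Pointwise $|\Omega|\lesssim|u|\cdot|\nabla u|$, which via a naive Sobolev--H\"older argument only gives $L^{n-\eps}_{\mathrm{loc}}$ for every $\eps>0$. To reach the Lorentz endpoint one uses the decomposition
\[
u^{j_1}\partial_\beta u^m=\tfrac12\partial_\beta(u^{j_1}u^m)+\tfrac12\bigl(u^{j_1}\partial_\beta u^m-u^m\partial_\beta u^{j_1}\bigr):
\]
the symmetric piece is an exact derivative that is absorbed into $A$ by one further integration by parts (slightly modifying $\omega$), while the antisymmetric piece is a classical div--curl-type quantity whose local Hardy-space integrability, combined with the Lorentz refinement of the critical Sobolev embedding for $W^{1,n}$, places $\Omega$ in $L^{(n,n/2)}_{\mathrm{loc}}$.

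The main obstacle I expect is the antisymmetrization step: the decomposition displayed above is not manifestly skew in $(i,j)$, and producing a genuinely $so(n+1)$-valued $\Omega$ requires a careful averaging over the $n$ possible choices of the column from which the gradient is extracted, together with correct sign bookkeeping for the cofactor expansion. Working throughout in the language of differential forms, where the graded skew symmetry is automatic from the wedge, is probably the cleanest route. A secondary, purely technical difficulty is the Lorentz refinement itself: because $u^{j_1}\nabla H(u)$ is borderline in $L^n$ and $\Omega$ borderline in $L^{(n,n/2)}$, the improvement over a naive H\"older estimate genuinely requires a paraproduct or div--curl input rather than elementary Sobolev interpolation.
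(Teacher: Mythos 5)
The product-rule decomposition you start from,
\[
H(u)\,\nabla u^{j_1}=\nabla\bigl(H(u)u^{j_1}\bigr)-u^{j_1}\nabla H(u),
\]
has a genuine problem at the critical level: both pieces involve the product $u\cdot\nabla u$ of the map $u$ (which is in $W^{1,n}$, hence in $BMO$ and exponentially integrable, but \emph{not} in $L^\infty$) with a gradient in $L^n$. Such a product is generically only in $L^{n-\varepsilon}$ for every $\varepsilon>0$, not in $L^n$. Consequently your candidate $\omega^{j_1}:=H(u)u^{j_1}$ is not in $W^{1,n}_{\text{loc}}$, and the term you isolate as $B$ has an $\Omega$ that fails even $L^n$, let alone $L^{(n,n/2)}$. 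No paraproduct or div-curl estimate can rescue this: the quantity $u^{j_1}\partial_\beta u^m-u^m\partial_\beta u^{j_1}$ you invoke in the final step is an asymmetric $u\nabla u$-combination, not a genuine Jacobian (determinant of $n$ gradients) nor a pairing $V\cdot\nabla a$ with $\dv V=0$; it carries no Hardy-space gain. The obstacle you flagged as merely ``technical'' is in fact the failure of the approach.

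The paper does something structurally different at exactly this point. Instead of the naive product rule it applies the Hodge decomposition on $\R^n$ to the one-form $H(u)\,du^j$,
\[
H(u)\,du^j=d\omega^j+\delta\beta^j,
\]
so that $\omega^j$ and $\beta^j$ are produced by \emph{inverting the Laplacian}: $\lap\omega^j=\dv(H(u)\nabla u^j)$ and $\lap\beta^j=dH(u)\wedge du^j$. The Calder\'on--Zygmund estimate for $\omega^j$ involves only $\|H\|_\infty\|\nabla u\|_{L^n}$ --- no $\nabla H$ and no unbounded factor $u$ ever appears --- which is what places $\omega\in W^{1,n}_{\text{loc}}$ unconditionally. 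The coexact part $\delta\beta^j$ inherits the genuine Jacobian structure of $dH(u)\wedge du^j$ (both $H(u)$ and $u^j$ lie in $W^{1,n}$, with $H(u)$ additionally bounded); this, together with the smoothing of $\lap^{-1}$, is precisely what produces the Lorentz improvement $\nabla\beta\in L^{(n,n/2)}$. The antisymmetry is then automatic because the whole right-hand side is first rewritten as $\Gamma_{ik}\wedge du^k$ with $\Gamma_{ik}=\frac{1}{2n}\,u^\ast\!\star(dy^i\wedge dy^j\wedge dy^k)\wedge u^\ast(H\,dy^j)$, which is manifestly skew in $(i,k)$ --- no a posteriori symmetrisation over the privileged column is needed. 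In short, your differential-form framing and the intuition about the Jacobian gain are on the right track, but the product-rule split must be replaced by the Hodge decomposition to obtain the stated integrability for $\omega$ and $\Omega$; without it the statement is not proved.
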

\begin{proof}
Again, it will be beneficial to use the language of differential $\ell$-forms $\Ep^\ell \R^{n+1}$. Let $y^i$ be the coordinates on $\R^{n+1}$. An orthonormal basis for $\Ep^\ell \R^{n+1}$ is then
\begin{equation}\label{Ep-basis}
\left \{dy^{i_1} \wedge \ldots \wedge dy^{i_\ell}: \quad 1 \leq i_1 < \ldots < i_\ell \leq n+1 \right \}.
\end{equation}
The Hodge-star operator $\star: \Ep^\ell \R^{n+1} \to \Ep^{n+1-\ell} \R^{n+1}$ maps $\ell$-forms $\omega$ into $n+1-\ell$-forms $\star \omega$ so that
\[
 \omega \wedge \star \omega = |\omega|^2.
\]
Here $|\omega|$ is the norm on $\Ep^{\ell}\R^{n+1}$ induced by the orthonormal basis in \eqref{Ep-basis}.

If we denote $u^\ast$ as the pullback of differential forms via $u$, the $H$-system \eqref{hsystem} can be written
\begin{equation}
\label{H-forms}
 \dv(|\nabla u|^{n-2}\nabla u^i) = H(u)\ u^\ast (\star dy^i).
\end{equation}
Since
\[
 \star dy^i  = \star (dy^i \wedge dy^j \wedge dy^k) \wedge dy^j \wedge dy^k,
\]
we can write \eqref{hsystem} as
\[
 \dv(|\nabla u|^{n-2}\nabla u^i) =  \Gamma_{ik} \wedge du^k,
\]
for
\[
 \Gamma_{ik} := \frac{1}{2n}  u^\ast \star (dy^i \wedge dy^j \wedge dy^k) \wedge u^\ast(Hdy^j).
\]
$\Gamma$ is clearly antisymmetric, $\Gamma_{ik} = -\Gamma_{ki}$. By Hodge decomposition on $\R^{n}$, 
\[
 u^\ast(H dy^j) = d\omega^j + \delta \beta^j,
\]
where $\omega^j \in W^{1,n}_{loc}(\R^n)$ and $\beta^j \in W^{1,n}_{loc}(\R^n,\Ep^{2} \R^n)$ satisfy the equations
\[ 
 \lap \omega^j = \dv(H(u) \nabla u),
\]
and
\begin{equation}\label{eq:lapbetaj}
\lap \beta^j = dH(u) \wedge du^j.
\end{equation}
Elliptic estimates then give
\[
 \|\omega^j\|_{W^{1,n}} \aleq \|H\|_\infty \|\nabla u \|_{L^{n}}.
\]
Equation \eqref{eq:lapbetaj} has a Jacobian structure on the right-hand side and $H(u) \in W^{1,n}$. That is why the estimates for $\beta$ is a little bit better,
\[
 \|\nabla \beta \|_{L^{(n,\frac{n}{2})}} \aleq \|\nabla u\|_{L^n}^2.
\]
We set
\[
 \Omega_{ik} := |\nabla u|^{2-n}\frac{1}{2n}\sum_{j=1}^n  u^\ast \star (dy^i \wedge dy^j \wedge dy^k) \wedge \delta\beta^j,
\]
which is still antisymmetric in $i$,$k$. Moreover, by the estimate above, $\Omega \in L^{(n,\frac{n}{2})}$.

On the other hand,
\[
u^\ast \star (dy^i \wedge dy^j \wedge dy^k) \wedge d\omega^j \wedge du^k
\]
is the $n$-form 
\[
 (-1)^{i+j+k} du^1 \wedge \ldots du^{i-1}\wedge du^{i+1} \wedge \ldots du^{j-1}\wedge du^{j+1}\ldots du^{k-1}\wedge du^{k+1} \wedge \ldots du^{n+1} \wedge d\omega^j \wedge du^k
 \]
which is the $(-1)^{i+j+k}$ times the volume element of $\R^n$ times the determinant
\[
 \det\nolimits_{n\times n} (\nabla u^1, \ldots, \nabla u^{i-1},\nabla u^{i+1},\ldots ,\nabla u^{j-1},\nabla u^{j+1},\ldots, \nabla u^{k-1},\nabla u^{k+1} , \ldots, \nabla u^{n+1}, \nabla \omega^j , \nabla u^k).
\]
\end{proof}

Let us now consider the right-hand side terms in Proposition~\ref{RHSdecomposition} separately. If we consider solutions to
\[
 \dv(|\nabla u|^{n-2}\nabla u^i) = A,
\]
where $A$ has the determinant structure as in Proposition~\ref{RHSdecomposition}, then continuity follows immediately via Hardy-BMO duality.

Solutions of the equation
\[
 \dv(|\nabla u|^{n-2}\nabla u^i) = B,
\]
where $B$ has the structure as in  Proposition~\ref{RHSdecomposition}, are still not understood, even if the the antisymmetric potential $\Omega$ satisfies a sharper Lorentz-space estimate. If $n=2$, the antisymmetric potential belongs to $L(2,1)$ and the equation regularizes without any use of the antisymmetry by standard potential estimates. If $n > 2$, it is unclear how the antisymmetry improves our chances. We thus propose a simplified version of Problem~\ref{rivierepb}.

\begin{problem}\label{Lorentzrivierepb}
Let $u \in W^{1,n}(B,\R^m)$ be a solution to \eqref{nrivieresystem} for 
where $\Omega$ is an $L^{(n,p)}$-matrix with values in $so(m)\otimes\R^n$. For which $p \in [1,n]$ is $u$ necessarily continuous?
\end{problem}  
If $p = 1$, i.e. $\Omega \in L(n,1)$, Lipschitz continuity of solutions follows -- even without antisymmetry of $\Omega$. This is due to 
Duzaar and Mingione \cite{Duzaar-Mingione}. We know of no argument which shows any improvement of regularity for solutions to \eqref{nrivieresystem} with antisymmetric potential $\Omega \in L^{(n,p)}$ for $p > 1$. Recall, that this is not only a problem of right-hand side estimates, but also the left-hand side estimates for this equation are unclear, see Problem~\ref{rieszest}.

One way to gain more insight in this situation is to derive somewhat simplified sub-problems of regularity for solutions to
\[
\dv(|\nabla u|^{n-2}\nabla u^i) = \Omega\cdot  |\nabla u|^{n-2}\nabla u^i.
\]
Let $\mathcal{R}_\alpha$ be the $\alpha$-th Riesz transform. For any vector field $F = (F_1,\ldots,F_n)$ we have a zero order Hodge decomposition
\[
F_\alpha = \Rz_\alpha \Rz_\beta F_\beta + \Rz_\beta (\Rz_\beta F_\alpha - \Rz_\alpha F_\beta). 
\]
Applying this to $|\nabla u|^{n-2} \partial_\alpha u$, we can decompose
\[
|\nabla u|^{n-2} \partial_\alpha u = \Rz_\alpha w + e_\alpha,
\]
where
\[
w :=\Rz_\beta (|\nabla u|^{n-2}\partial_\beta u). 
\]
Since $u\in W^{1,n}$ and the Riesz transforms $\Rz_\beta$ are bounded on $L^p$, $1<p<\infty$, the natural assumption here is $w\in L^{n'}$.
The above equation then changes into
\[
\laps{1} w^i = \Omega^\alpha_{ij}\Rz_\alpha [w^i] + \Omega^\alpha_{ij} e_\alpha. 
\]
Ignoring the error $e_\alpha$-part, this equation is of the form
\[
\laps{1} w^i = \Omega[w^i]
\]
where $\Omega$ is acting as a linear operator (and not just as a pointwise multiplication). In \cite{Schikorra-2012} it was shown that these potentials regularize the equation in $\R^2$ (a generalization of the arguments developed for pointwise multiplication operators \cite{DaLio-Riviere-2011b}, see also \cite{Riviere-2011Schroedinger}). This argument can be extended to obtain the following
\begin{theorem}
Let $w \in L^{n'}(\R^n)$ be a solution to
\[
\laps{1} w^i = \Omega^\alpha_{ij}\Rz_\alpha [w^i].
\]
If we assume $\Omega$ to be antisymmetric and $\Omega \in L^{(n,2)}$, then $w \in L^{p}$ for some $p > n'$.
\end{theorem}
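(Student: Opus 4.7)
The plan is to extend to $n\geq 3$ the strategy of \cite{Schikorra-2012}, which settles the analogous statement in dimension two. The two main ingredients of that argument are an Uhlenbeck--Coulomb type gauge exploiting the antisymmetry of $\Omega$, and three-term commutator estimates for $\laps{1/2}$ combined with the Coifman--Rochberg--Weiss commutator theorem \cite{CRW} for the Riesz transforms. Here everything has to be carried out in the Lorentz scale, the gain over the $L^{n}$ borderline being provided precisely by the hypothesis $\Omega\in L^{(n,2)}$.

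\smallskip

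Concretely, I would first construct, on each sufficiently small ball, a fractional gauge $P\in L^{\infty}\cap \dot W^{1/2,(n,2)}(\R^{n},SO(m))$ by minimizing a Dirichlet-type functional adapted to $\Omega$, along the lines of \cite{DaLio-Riviere-2011b,Schikorra-2012}. The strict inclusion $L^{(n,2)}\subsetneq L^{n}$ guarantees that $\|\Omega\|_{L^{(n,2)}(B_{r})}$ is small for $r$ small, which is what makes the minimization well-posed and produces a $P$ such that the conjugated potential $\tilde\Omega:=P^{-1}\Omega P+P^{-1}\laps{1/2}P$ is (weakly) Hodge-divergence-free. Setting $v:=Pw$ and using the three-term commutator
\[
H_{1/2}(f,g):=\laps{1/2}(fg)-f\laps{1/2}g-g\laps{1/2}f
\]
together with ordinary Riesz commutators $[\Rz_{\alpha},f]$, the conjugated equation takes the schematic form $\laps{1}v=\dv\mathbf{F}+E$, where $\mathbf{F}$ lies in a Hardy--Lorentz space strictly smaller than $L^{1}$ (by the divergence-free structure of $\tilde\Omega$) and $E$ is a commutator error. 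Inverting $\laps{1}$ through the Riesz potential $I_{1}$ and using its mapping properties on Hardy and Hardy--Lorentz spaces (cf. \cite{Semmes-1994}) would yield $v\in L^{p}_{\mathrm{loc}}$ for some $p>n'$; the Coifman--Rochberg--Weiss theorem and the Lorentz H\"older inequality $L^{(n,2)}\cdot L^{n'}\subset L^{(n',q)}$ for some finite $q$ absorb $E$. Since $P\in L^{\infty}$, the same integrability transfers to $w$, and a covering argument globalizes the bound.

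\smallskip

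The hardest step is the fractional gauge construction. In $n=2$, the argument of \cite{Schikorra-2012} benefits from the embedding $\dot W^{1/2,2}(\R^{2})\hookrightarrow BMO$, which makes the relevant commutator estimates essentially sharp. For $n\geq 3$ one only has $\dot W^{1/2,2}(\R^{n})\hookrightarrow L^{2n/(n-1)}$, and the second Lorentz index of $L^{(n,2)}$ is exactly what should compensate for this missing logarithmic slack. Making the compensation quantitative---verifying that $P$ can indeed be chosen with $\laps{1/2}P\in L^{(n,2)}$ and that $\tilde\Omega$ genuinely gains Hardy-type integrability after conjugation---is the technical crux, and the place where any ingredient beyond the $n=2$ case has to enter.
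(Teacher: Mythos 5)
First, be aware that the paper does not actually prove this theorem: immediately after the statement it notes that the proof ``is very technical\ldots{} It follows closely the arguments in \cite{Schikorra-2012}, and we are not going to give it here,'' and instead proves the local surrogate Theorem~\ref{th:lapuomegaun}. So there is no argument of the paper to check you against directly. Your general plan --- a Coulomb-type gauge exploiting antisymmetry, followed by commutator and Hardy-type estimates in the Lorentz scale --- is consistent with the paper's pointer and with the shape of the local model.

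That said, two steps in your sketch do not hold up as written. (i) The embedding $\dot W^{1/2,2}(\R^2)\hookrightarrow BMO$ is false: one has $\dot W^{1/2,2}(\R^2)\hookrightarrow L^4$ only, while a $BMO$ embedding for a $1/2$-order space requires $\dot W^{1/2,2n}(\R^n)$ (equivalently $\dot W^{n/2,2}(\R^n)$). The usable slack, in every dimension, is provided by the second Lorentz index in $\Omega\in L^{(n,2)}$, not by any $BMO$ embedding, so the $n=2$ versus $n\geq 3$ dichotomy you describe is not the right way to frame the difficulty. (ii) Simply ``inverting $\laps{1}$'' cannot produce $w\in L^p$ for some $p>n'$. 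The Lorentz--H\"older estimate puts $\Omega\cdot\Rz w$ in $L^{(1,q)}$ with $1/q=1/2+1/n'$, which improves only the \emph{second} Lorentz index; a gain there never raises the primary Lebesgue exponent, and in fact for $n\geq 3$ one gets $q<1$, where $L^{(1,q)}$ is not even normed and $I_1$ has no good mapping property. What the paper's model proof does --- and what the genuine nonlocal proof must mimic --- is to use the gauged equation to derive a Morrey decay $\|\cdot\|_{L^{(n',\infty)}(B_r)}\leq\tau\|\cdot\|_{L^{(n',\infty)}(B_{2r})}$, iterate it on balls, then repeat the testing with $\laps{\eps}\varphi$ to raise the differentiability, and finally invoke Adams' Riesz-potential estimates in Morrey spaces to climb strictly above $n'$. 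Your sketch needs this iterative Morrey--Adams scheme in place of the single inversion step; without it the conclusion $p>n'$ does not follow. (A smaller point: the gauge in the local model satisfies $\nabla P\in L^{(n,2)}$, i.e.\ one full derivative in $L^{(n,2)}$ matching the scaling of $\Omega$, so the regularity $\laps{1/2}P\in L^{(n,2)}$ you propose is likely half a derivative short.)
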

The proof is very technical (because the equation is non-local). It follows closely the arguments in \cite{Schikorra-2012}, and we are not going to give it here. We rather give an argument for a related situation.

\begin{theorem}\label{th:lapuomegaun}
Let $u \in W^{1,n'}(B^n,\R^m)$ be a solution to
\begin{equation}\label{eq:lapuomegau}
-\lap u^i = \Omega^\alpha_{ij}\nabla u.
\end{equation}
If $\Omega$ is antisymmetric and $\Omega \in L^{(n,2)}$, then $\nabla u \in L^{p}_{loc}(B^n)$ for some $p > n'$.
\end{theorem}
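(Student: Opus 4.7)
The plan is to combine an antisymmetric Coulomb gauge, built in the Lorentz scale thanks to the assumption $\Omega \in L^{(n,2)}$, with a div--curl compensation in the spirit of Rivi\`ere \cite{Riviere-2007}, and to close the argument via a reverse H\"older inequality of Gehring type to gain integrability beyond $L^{n'}$. To begin, by the absolute continuity of the $L^{(n,2)}$-norm, on every sufficiently small ball $B_r \subset B^n$ one has $\|\Omega\|_{L^{(n,2)}(B_r)} < \eps_0$, where $\eps_0$ is a smallness threshold to be fixed. On each such $B_r$ I would construct a gauge $P \in W^{1,(n,2)}(B_r; SO(m))$ so that
\[
\tilde\Omega := P^{-1}dP + P^{-1}\Omega P \in L^{(n,2)}(B_r;\, so(m)\otimes\R^n)
\]
is coclosed, $d^\ast \tilde\Omega = 0$ in $B_r$, with the standard tangential condition on $\partial B_r$. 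The existence of such $P$ for $\Omega \in L^n$ is by now classical \cite{Uhlenbeck-1982,Riviere-2007,Wang-2005,Schikorra-2010}, and the Lorentz-scale version should follow by a line-by-line variant of the fixed-point argument, relying on the continuity of the Hodge decomposition and of composition with $SO(m)$-valued maps between $L^{(n,2)}$ classes.

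Setting $w := P^{-1} u$, a direct computation then yields
\[
-\lap w = \tilde\Omega \cdot \nabla w + \mathcal R(P, u),
\]
where the remainder $\mathcal R$ collects lower-order terms built from $\nabla P$, $\lap P$, and $u$, which by the elliptic equation satisfied by $P$ are controlled by Calder\'on--Zygmund estimates in Lorentz spaces. Because $\tilde\Omega$ is both antisymmetric and coclosed, Hodge theory on $B_r$ provides an antisymmetric $so(m)$-valued 2-form potential $\eta \in W^{1,(n,2)}$ with $\tilde\Omega_{ij} = d^\ast \eta_{ij}$, so that the principal right-hand side $\tilde\Omega \cdot \nabla w$ has a genuine div--curl structure.

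A Lorentz refinement of the Coifman--Lions--Meyer--Semmes lemma \cite{CLMS} then gives a bound
\[
\Bigl|\int \varphi\ \tilde\Omega \cdot \nabla w\Bigr| \aleq \|\nabla \eta\|_{L^{(n,2)}}\ \|\nabla w\|_{L^{(n',2)}}\ [\varphi]_{BMO},
\]
which, combined with a Riesz representation $w = h + I_2(\tilde\Omega \cdot \nabla w + \mathcal R)$ on $B_r$ (with $h$ harmonic) and the Lorentz Hardy--Littlewood--Sobolev estimate $I_1 : L^{(1,q)} \to L^{(n',q)}$, converts into a self-improving inequality of the form
\[
\biggl(\,\mvint_{B_{r/2}} |\nabla u|^{n'}\biggr)^{1/n'} \le \bigl(\tau + C\,\|\Omega\|_{L^{(n,2)}(B_r)}^\gamma\bigr) \biggl(\,\mvint_{B_r} |\nabla u|^{n'}\biggr)^{1/n'}
\]
for some $\gamma > 0$ and $\tau \in (0,1)$. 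A standard application of Gehring's reverse H\"older lemma, after a covering of $B^n$ by such small balls, upgrades $\nabla u$ from $L^{n'}_{loc}$ to $L^p_{loc}$ for some $p = p(\gamma, n) > n'$, which is the desired conclusion.

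The main obstacle is the first step, namely adapting the Coulomb gauge construction of \cite{Uhlenbeck-1982, Riviere-2007} to the $L^{(n,2)}$-setting. This is not a mere formality: the fixed-point argument requires continuity of nonlinear composition operators between Lorentz spaces, together with a careful propagation of the smallness of $\|\Omega\|_{L^{(n,2)}}$ to $\|\tilde\Omega\|_{L^{(n,2)}}$. A secondary difficulty is that the product $\tilde\Omega \cdot \nabla w$ does not literally sit in $\mathcal H^1$, so the div--curl compensation must be recast as a Lorentz-scale gain rather than a direct Hardy--BMO duality; the fact that the second Lorentz index is $q = 2$ rather than $q = 1$ is precisely what makes the antisymmetry indispensable, in contrast with the case $\Omega \in L^{(n,1)}$ treated by Duzaar--Mingione where antisymmetry is not needed.
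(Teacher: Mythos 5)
Your overall strategy --- a Coulomb gauge built in the Lorentz scale, followed by a Lorentz-space div--curl compensation --- is exactly the route the paper takes, and the first two-thirds of your argument track the published proof quite closely (the paper computes $-\dv(P^T\nabla u)=(P^T\nabla P+P^T\Omega P)\,P^T\nabla u$ directly, which avoids the zeroth-order remainder terms introduced by the substitution $w=P^{-1}u$, but that is a cosmetic difference). The paper also uses the Sobolev embedding $\|\varphi\|_{L^\infty}\aleq\|\nabla\varphi\|_{L^{(n,1)}}$ so that the bound for $\nabla u$ lands by Lorentz duality in $L^{(n',\infty)}$; your $[\varphi]_{BMO}$ and $\|\nabla w\|_{L^{(n',2)}}$ choices do not form a dual pair, so you would need to reconcile the indices before the iteration closes.

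The genuine gap is the final step. The inequality you derive,
\[
\Bigl(\mvint_{B_{r/2}}|\nabla u|^{n'}\Bigr)^{1/n'}\le\bigl(\tau+C\|\Omega\|_{L^{(n,2)}(B_r)}^\gamma\bigr)\Bigl(\mvint_{B_r}|\nabla u|^{n'}\Bigr)^{1/n'},
\]
carries the \emph{same} exponent $n'$ on both sides, so it is a decay (Campanato--Morrey) estimate, not a reverse H\"older inequality. Gehring's lemma requires a strictly lower exponent on the right, e.g. $\mvint_{B_{r/2}}g^{n'}\le C(\mvint_{B_r}g^q)^{n'/q}+\theta\mvint_{B_r}g^{n'}$ with $q<n'$, and that structure is not produced by the duality argument here. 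Iterating your decay inequality yields only a Morrey-space bound of the type $\|\nabla u\|_{L^{(n',\infty)}(B_r)}\aleq r^\sigma$, and a Morrey estimate alone does not give membership in $L^p_{loc}$ for $p>n'$. The paper closes the argument differently: once the Morrey decay of $\|\nabla u\|_{L^{(n',\infty)}}$ is in hand, it repeats the whole duality scheme with test functions of the form $\laps{\eps}\varphi$ to obtain a Morrey-type decay for $\laps{\eps}\nabla u$, and then invokes Adams' embedding theorems for Riesz potentials on Morrey spaces to conclude the higher integrability. That fractional bootstrap is essential and has no counterpart in your proposal; without it (or an honest reverse H\"older inequality, which your estimates do not furnish), the claimed jump from $L^{n'}_{loc}$ to $L^p_{loc}$ is unjustified.
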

\begin{proof}
Since this is a local result, we may assume that $\|\Omega\|_{L^{(n,2)}} \leq \eps$ for a small enough $\eps > 0$. Then, with Rivi\`{e}re's extension \cite{Riviere-2007} of Uhlenbeck's \cite{Uhlenbeck-1982}, see also \cite{Schikorra-2010}, we obtain a gauge $P\in W^{1,n}(B^n,SO(m))$ and $\|\nabla P\|_{L^{(n,2)}} \leq \|\Omega\|_{L^{(n,2)}}$ so that
\[
\dv (P^T \nabla P + P^T \Omega P) = 0.
\]
Plugging this into \eqref{eq:lapuomegau} we have
\begin{equation}\label{eq:uhlenbeckdec}
-\dv (P^T \nabla u) = (P^T \nabla P + P^T \Omega P)\, P^T \nabla u.
\end{equation}
We test the right-hand side with $\varphi \in C_c^\infty(B^n)$ and 
use a slightly improved $div$-$curl$-estimate, cf \eqref{eq:slightextensionHardyBMO}:
\[
 \int F \cdot \nabla g\ h \aleq \|F\|_{L^{(p,q_1)}}\ \|\nabla h\|_{L^{(p',q_2)}}\ \|\nabla g\|_{L^{(n,q_3)}},
\]
which holds whenever $p \in (1,\infty)$, $\frac{1}{q_1} + \frac{1}{q_2} + \frac{1}{q_3} = 1$, $1 \leq q_1,q_2,q_3 \leq \infty$ and $\dv (F) = 0$.
Then, 
\[
\begin{split}
\int (P^T \nabla P + & P^T \Omega P) \nabla (P^T \varphi) \, u\\
& \aleq\;  \|\nabla u\|_{L^{(n',\infty)}}\ \|P^T \nabla P + P^T \Omega P\|_{L^{(n,2)}}\ \| \nabla (\varphi P)\|_{L^{(n,2)}}\\
& \aleq\;  \|\nabla u\|_{L^{(n',\infty)}}\ \|\Omega\|_{L^{(n,2)}}\ (\|\varphi\|_{L^\infty} + \|\nabla \varphi\|_{L^{(n,2)}}).
\end{split}
\]
Using the Sobolev inequality
\[
\|\varphi \|_{L^\infty} \aleq \|\nabla \varphi\|_{L^{(n,1)}},
\]
and \eqref{eq:lapuomegau}, we obtain for any $\varphi \in C_c^\infty(B^n)$,
\[
 \biggl|\int P^T \nabla u\cdot \nabla \varphi \biggr| \;\aleq\; \eps\, \|\nabla u\|_{L^{(n',\infty)}}\, \|\nabla \varphi \|_{L^{(n,1)}}.
\]

Thus, using Hodge decomposition and suitably localizing, we obtain that for some $\tau < 1$ on any small balls $B_r$ it holds
\[
\|\nabla u\|_{L^{n',\infty}(B_r)} \leq \tau \|\nabla u\|_{L^{n',\infty}(B_{2r})}.
\]
An iteration argument now gives.
\[
\sup_{B_{2r} \subset B} r^{-\sigma}\|\nabla u\|_{L^{n',\infty}(B_r)} \aleq  \|\nabla u\|_{L^{n'}(B)}.
\]
Having this estimate, one repeats the above argument with $\laps{\eps} \varphi$ instead of $\varphi$, and obtains that
\[
\sup_{B_{2r} \subset B} r^{-\sigma}\|\laps{\eps}\nabla u\|_{L^{\frac{n}{n-1+\eps},\infty}(B_r)} < \infty
\]
Then Adams' estimates on Riesz potentials in Morrey spaces \cite{Adams-1975} give the claim.
\end{proof}

Let us remark that for $n=2$ a much more beautiful argument works for Theorem~\ref{th:lapuomegaun}. Under a smallness-assumption on $\|\Omega\|_{L^2}$, one can not only find $P$ as in \eqref{eq:uhlenbeckdec}, but Rivi\`{e}re \cite{Riviere-2007} was able to find $A \in W^{1,2}\cap L^\infty(B^2,GL(m))$ so that
\[
 \dv (\nabla A + A\Omega ) = 0.
\]
Then $u$ actually satisfies a conservation law, 
\[
 -\dv (A \nabla u - (\nabla A + A\Omega ) u) = 0,
\]
and regularity follows from a simple duality argument. The construction of $A$ however crucially depends on Wente's inequality \cite{Wente-1969,Tartar-1985,Brezis-Coron-1984}: if 
\[
 \lap f = G \cdot \nabla h \quad \mbox{in $\R^2$},
\]
and $\dv (G) = 0$, then
\[
 \|f\|_{L^\infty} \leq C\ \|G\|_{L^2}\ \|\nabla h\|_{L^2}.
\]
It is unknown how to construct such a map $A$ in $\R^n$ for $n > 2$: the counterpart of Wente's inequality for $n>2$ is missing.

Indeed, there are well known counterexamples that show $\Delta_n u =g \in \mathcal{H}^1 \not \Rightarrow u \in C^0$, see Firoozye \cite{Firoozye-1995}, and also Iwaniec and Onninen's counter-example for right-hand side in an Orlicz space \cite{Iwaniec-Onninen-2007}.

\subsection*{Acknowledgement.} The work of the first author has been partially supported by DFG-grant SCHI-1257-3-1. The work of the second author has been partially supported by the NCN grant no. 2012/07/B/ST1/03366.

\bibliography{hsys}{}
\bibliographystyle{amsplain}

\end{document}